\documentclass[onefignum,onetabnum]{siamart251216}

\usepackage{amsfonts}
\usepackage{graphicx}
\usepackage{epstopdf}
\usepackage{algorithmic}
\ifpdf
  \DeclareGraphicsExtensions{.eps,.pdf,.png,.jpg}
\else
  \DeclareGraphicsExtensions{.eps}
\fi

\newsiamremark{remark}{Remark}
\newsiamremark{hypothesis}{Hypothesis}
\crefname{hypothesis}{Hypothesis}{Hypotheses}
\newsiamthm{claim}{Claim}

\headers{An efficient method for quadrilateral mesh generation}{J. Dai, Z. Qiao, and D. Wang}

\title{An efficient and stable diffusion generated method for quadrilateral mesh generation in general domains\thanks{
The second author was partially supported by NSFC/RGC Joint Research Scheme grant
N\_PolyU5145/24 and Hong Kong Research Grants Council GRF grant 15305624. The third author was partially supported by National Natural Science Foundation of China (Grant No. 12422116), Guangdong Basic and Applied Basic Research Foundation (Grant No. 2023A1515012199), Shenzhen Science and Technology Innovation Program (Grant No. JCYJ20220530143803007, RCYX20221008092843046), Guangdong Provincial Key Laboratory of Mathematical Foundations for Artificial Intelligence (2023B1212010001), and Hetao Shenzhen-Hong Kong Science and Technology Innovation Cooperation Zone Project (No. HZQSWS-KCCYB-2024016). }}

\author{
 Jingwen Dai\thanks{Department of Applied Mathematics, The Hong Kong Polytechnic University, Hung Hom, Kowloon, Hong Kong (\email{jing-wen.dai@connect.polyu.hk}).}
  \and
Zhonghua Qiao\thanks{Department of Applied Mathematics, The Hong Kong Polytechnic University, Hung Hom, Kowloon, Hong Kong (\email{zqiao@polyu.edu.hk}).}
  \and
Dong Wang\thanks{Corresponding author. School of Science and Engineering, The Chinese University of Hong Kong, Shenzhen, Guangdong, China;
  Shenzhen International Center for Industrial and Applied Mathematics, Shenzhen Research Institute of Big Data, Guangdong, China (\email{wangdong@cuhk.edu.cn}).}
}

\usepackage{amsopn}

\makeatletter
\newcommand*{\addFileDependency}[1]{
  \typeout{(#1)}
  \@addtofilelist{#1}
  \IfFileExists{#1}{}{\typeout{No file #1.}}
}
\makeatother

\ifpdf
\hypersetup{
  pdftitle={A fast diffusion generated computational method for quadrilateral meshes},
}
\fi

\usepackage{multirow}
\usepackage{caption}
\usepackage{subcaption}
\usepackage[a4paper,margin=1in]{geometry}

\crefname{figure}{Fig.}{Figs.}   
\Crefname{figure}{Fig.}{Figs.}

\begin{document}

\maketitle

\begin{abstract}
This paper introduces a novel, robust, and computationally efficient framework for high-quality quadrilateral mesh generation on general two-dimensional domains. The core of the proposed approach is a novel method for computing cross fields by minimizing a modified and relaxed Ginzburg--Landau-type energy functional. A key innovation is the extension of the problem from the original, potentially complex domain to a larger regular computational domain. This extension transforms the central computational procedure into an iterative scheme that requires only two straightforward and efficient operations: linear diffusion solved globally via the Fast Fourier Transform (FFT) and point-wise normalization. Notably, our method eliminates the conventional need for generating an intermediate triangular mesh or solving complex nonlinear optimization problems on the irregular domain. We provide a rigorous theoretical analysis, proving that the proposed iterative algorithm guarantees unconditional monotonic decay of the objective functional. Comprehensive numerical experiments demonstrate the method's robustness across a wide range of complex geometries, its significant computational efficiency afforded by the FFT-based diffusion, and its consistent generation of high-quality quadrilateral meshes. This work presents a reliable and theoretically sound alternative to existing mesh generation techniques, with strong potential for practical applications in scientific computing.
\end{abstract}

\begin{keywords}
mesh generation, cross-field, Ginzburg--Landau model, diffuse domain method, stability 
\end{keywords}

\begin{AMS}
  65N50, 35Q56, 49Q10
\end{AMS}

\section{Introduction}
\label{sec:intro}
Block-structured quadrilateral meshes are fundamental to scientific computing, offering a strategic compromise between the efficiency of structured grids and the geometric adaptability needed for complex domains. The locally structured topology of these meshes is particularly advantageous for numerical methods; it facilitates the implementation of high-order tensor-product schemes and accelerates solver convergence. Moreover, the inherent regularity optimizes memory access patterns, making these meshes highly efficient for large-scale data handling. Consequently, they are extensively employed in diverse applications, ranging from computational fluid dynamics \cite{durst1996parallel,raddatz2005block,zou2024finite} to computer graphics tasks such as texture generation \cite{bokhovkin2023mesh2tex,li2021interactive} and animation~\cite{marcias2013animation}.

In the automatic generation of block-structured quadrilateral meshes, the design of the topological structure—commonly referred to as block decomposition or quad layout generation—remains one of the most challenging components \cite{Armstrong2015,bommes2013quad}. The core idea is to identify a small number of irregular points and construct connecting separatrices (or decomposition lines) that partition the computational domain into multiple blocks, each of which can be mapped to a regular quadrilateral grid. The configuration of irregular points and their connecting separatrices—which effectively define the multi-block structure of the mesh—is strictly governed by the interplay between topological invariants (specifically, the Euler characteristic) and the geometric constraints of the domain boundary. Features such as high-curvature regions and sharp corners impose local valence constraints that must be balanced globally to satisfy topological validity, often limiting the available design space for the block decomposition~\cite{BEAUFORT2017219,bommes2013quad,fogg2017simple}. As a result, developing efficient, robust, and geometry-aware algorithms for block decomposition across diverse geometric domains has become a popular direction in structured mesh generation.


A widely adopted approach to quadrilateral mesh generation is based on computing a smooth cross field~\cite{BEAUFORT2017219,kowalski2013pde,Osting2019}, which defines a locally orthogonal set of directions aligned with the desired mesh orientation. A cross field provides continuous directional guidance over the domain, and its singularities correspond to irregular nodes in the final mesh. Once the field is computed, a quadrilateral layout can be constructed by tracing separatrices—stream lines that connect singularities or terminate at the boundary—thereby inducing a block decomposition suitable for quad meshing.

A typical procedure is illustrated in \cref{fig:workflowCF}. The process begins with the prescription of unit normal vectors on the boundary of a closed planar domain $\Omega_1 \subset \mathbb{R}^2$. These boundary conditions are employed to construct a boundary-aligned cross field by computing an associated representation field. Here, boundary alignment implies that one of the cross directions coincides with the unit normal to the boundary. This representation field is typically obtained by approximately solving the limiting Ginzburg--Landau model~\cite{bethuel1994ginzburg}, which involves minimizing a Dirichlet energy functional subject to a unit-norm constraint,
\begin{equation}
\label{eq:originproblem}
\begin{aligned}
    & \min_{\mathbf{u} \in H^1(\Omega_1; \mathbb{R}^2)} 
    && E(\mathbf{u}) := \frac{1}{2} \int_{\Omega_1} |\nabla \mathbf{u}(x)|^2 \, \mathrm{d}x \\
    & \text{subject to} 
    && |\mathbf{u}(x)| = 1 \quad \quad \text{for a.e. } x \in \Omega_1, \\
    &
    && \mathbf{u}(x) = \mathbf{g}(x) \quad \text{on } \partial \Omega_1.
\end{aligned}
\end{equation}
The resulting vector field is post-processed to construct a quadrilateral layout; specifically, the domain is partitioned into subdomains defined by separatrices and boundaries. Subsequently, a structured quadrilateral mesh is mapped into each four-sided block. This paper focuses on the mathematics of cross-field generation and the development of efficient algorithms.
\begin{figure}[t!]
\centering
\includegraphics[scale=0.3]{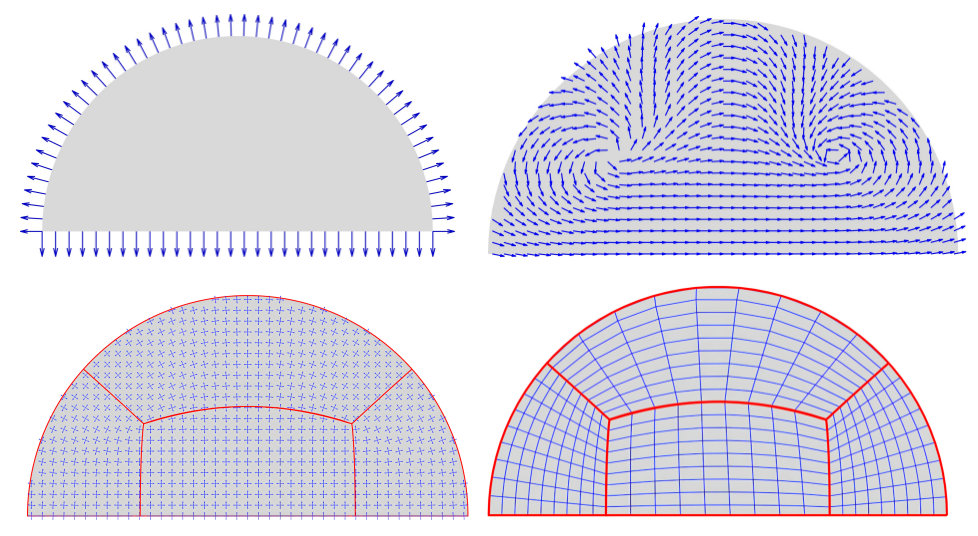}
\caption{Procedure of quadrilateral mesh generation using a cross field. From top left to bottom right: (a) prescribed boundary conditions; 
(b) computed smooth cross field; (c) extracted singularity graph and parameterization; (d) final quadrilateral mesh aligned with the cross field. See Section~\ref{sec:intro}.}
\label{fig:workflowCF}
\end{figure}

Incorporating~\eqref{eq:originproblem} into the vector field design process establishes rigorous theoretical guarantees for quadrilateral mesh generation~\cite{bethuel1994ginzburg,kowalski2013pde,Osting2019}. The solution of~\eqref{eq:originproblem} ensures the smoothness of the solution away from singularities, while naturally allowing for the formulation of isolated singularities. These singularities correspond to irregular nodes in the final mesh and are guaranteed to appear and satisfy Poincar\'e--Hopf theorem. Moreover, the separatrices of the resulting cross field provably partition the domain into a valid quad layout, enabling structured quadrilateral mesh generation. These establish a robust and theoretically grounded framework for cross field construction and quad layout generation.

Subsequent numerical approaches address primarily the challenge posed by the unit-norm constraint, which leads to an empty feasible set due to topological obstructions. Kowalski, Ledoux and Frey~\cite{kowalski2013pde} considered a gradient flow formulation of the Dirichlet energy and employed Lagrange multipliers to enforce the unit-norm constraint linearly. At each iteration, the system solves the following system to a proper time step $\Delta t$ 
\begin{equation*}
\begin{cases}
    \partial_t \mathbf{u} = \Delta \mathbf{u} & \text{in } \Omega_1 \times (0, \Delta t], \\
    \mathbf{u}(x) = \mathbf{g}(x) & \text{on } \partial\Omega_1 \times (0, \Delta t], \\
    \mathbf{u}(x, 0) = \mathbf{u}^{(n)}(x) & \text{in } \Omega_1, \\
    \mathbf{u}(x, \Delta t) \cdot \mathbf{u}^{(n)}(x) = 1 & \text{in } \Omega_1,
\end{cases}
\end{equation*}to obtain $\mathbf{u}^{(n+1)}$. 
Fogg \textit{et al.}~\cite{fogg2013multi} synthesize a reference angular field by leveraging Bunin's theory of anisotropic sizing. To approximate this reference field, they minimize a composite energy functional comprising a Dirichlet smoothness term and two alignment penalty terms. These potentials are designed to penalize deviations from both the values and the gradients of the reference field. The behavior of the resulting field is governed by a set of user-prescribed weights assigned to these three energy terms. The mechanism by which these hyperparameters influence the results has not been fully understood, and certain ratios were found to increase the number of singularities. Bommes, Zimmer, and Kobbelt~\cite{bommes2009mixed} reformulated the problem using the angle representation of cross-field vectors, minimizing the energy functional \( E^{\theta} = \int_{\Omega} |\nabla \theta|^{2} \, dx \) over the quotient space \( \mathbb{R} \big/   \mathbb{Z}_4  \) to encode the unit norm constraint implicitly. In this representation, the angle variable \( \theta \) does not lie in a linear space, so the formulation is not equivalent to solving \( \Delta \theta = 0 \), and the resulting system requires a nonlinear solver with mixed-integer optimization.

Another idea is to consider energy generalizations to the Ginzburg--Landau functional~\cite{du2021maximum}
\begin{equation}
\label{eq:GL functional}
E^{\mathrm{GL}}_{\varepsilon}(\mathbf{u}) = E^{\mathrm{D}} + E^{\mathrm{P}}_{\varepsilon} = \frac{1}{2} \int_{\Omega} \lvert \nabla \mathbf{u} \rvert^{2} \,\mathrm{d}x + \frac{1}{4 \varepsilon^{2}} \int_{\Omega} \bigl( \lvert \mathbf{u} \rvert^{2} - 1 \bigr)^{2} \,\mathrm{d}x.
\end{equation}
In the above form, the unit norm constraints on the vectors are relaxed by adding a penalty term $E^P = \frac{1}{4 \varepsilon^{2}}\int_{\Omega}\left(|\mathbf{u}|^{2}-1\right)^{2} \,  \mathrm{d}x $ to the energy functional. In~\cite{BEAUFORT2017219}, Beaufort \textit{et al.} minimized the Ginzburg--Landau energy by solving the Euler--Lagrange equation directly and obtained a system of nonlinear partial differential equations solved with the Newton method based on Crouzeix--Raviart interpolation in a finite element framework. Jezdimirovi\'{c} \textit{et al.}~\cite{jezdimirovic2019mul} extended this formulation by including the term \( \mathbf{u}|\nabla \mathbf{u}|^{2} \) using the strategy proposed in~\cite{bommes2009mixed} and derived a formulation equivalent to \( \Delta \theta = 0 \). Blanchi \textit{et al.}~\cite{blanchi2021global} combined the Ginzburg--Landau model with the periodic global parameterization method~\cite{ray2006periodic} by introducing a periodic function \( z(p) = e^{2\mathrm{i}\pi\theta(p)} \) and formulating the minimization problems as
\[
z_{\varepsilon} = \operatorname*{arg\,min}_{z \in \mathbb{C}} \left\{ \frac{1}{2} \int_{\Omega} \lvert \nabla z - 2\pi \mathrm{i} \mathbf{V} z \rvert^{2} \,\mathrm{d}x + \frac{1}{4\varepsilon^{2}} \int_{\Omega} \bigl( \lvert z \rvert^{2} - 1 \bigr)^{2} \,\mathrm{d}x \right\}.
\]
to optimize the vector field. However, methods based on the Euler--Lagrange equations often exhibit strong nonlinearity and coupling, which pose significant challenges to numerical stability and efficiency. Viertel and Osting~\cite{Osting2019} proposed solving the Ginzburg--Landau-based cross field model using the diffusion generated method~\cite{Ruuth2001}, which is originally developed for simulating mean curvature flow \cite{esedoglu2015threshold,MBO1993} and extended to find general target-valued harmonic maps \cite{osting2020diffusion,quan2026unconditional,wang2022efficient}. This approach alternates between diffusion (solving a heat equation) and normalization. After each diffusion step, the solution is projected back onto the original constraint manifold (i.e., the unit circle), thereby maintaining the unit-norm condition and effectively approximating minimizers of the relaxed Ginzburg--Landau energy. However, the unconditional stability for the diffusion generated method is not straightforward in particular for general domains.

Many existing methods require pre-generated triangular meshes and involve complex preprocessing. Furthermore, reliance on nonlinear iterations or auxiliary PDE solvers often lacks theoretical stability guarantees. While cross-field methods are promising, energy minimization models such as \eqref{eq:originproblem} are often overly constrained by boundary conditions, leading to inflexible singularity patterns. This geometric sensitivity compromises robustness in complex domains. Consequently, balancing computational efficiency, robustness, and theoretical rigor remains a significant challenge.

In this paper, we propose a novel relaxation of~\eqref{eq:originproblem}. By incorporating the diffuse domain method (DDM)~\cite{guo2021diffuse,li2009solving,yu2020higher}, we reformulate the original partial differential equation problem over a larger and regular rectangular domain $\Omega_2$. This approach introduces a smooth phase-field function to replace the sharp boundary of the original geometry with a $O(\epsilon)$ diffuse transition layer, thereby eliminating the need for unstructured triangular meshes. Furthermore, we adopt the exponential of the Laplacian operator as a diffusion-based generation mechanism to approximate the Dirichlet energy over the extended domain. As a result, we formulate the following relaxed energy and solve a relaxed minimization problem subject to a unit-norm constraint over a regular rectangle domain. 

Based on the relaxed problem, we design an algorithm that is simple to implement, computationally efficient and numerically stable. It involves only two steps: diffusion and normalization. The method applies to general planar domains for computing cross fields and generating quadrilateral meshes. It does not rely on a pre-existing triangular mesh. The convolution between the free-space heat kernel and a function with compact support is computed efficiently with the Fast Fourier Transform (FFT). The method remains effective on irregular domains by smoothly extending them to a slightly larger square region. We also show that the algorithm ensures energy decreases at every iteration without requiring additional conditions.

The remainder of this paper is organized as follows. We begin in Section~\ref{sec:foundation} by outlining the topological foundations of quadrilateral meshing, introducing the cross field representation, and establishing its connection to the Ginzburg--Landau variational model. Section~\ref{sec:DM} details our core computational framework for cross field generation, which leverages a convolution-based diffuse domain method (C-DDM) within a diffusion-generated method. The derivation of a modified, computationally efficient Ginzburg--Landau-type energy and the associated iterative scheme for finding its minimizers are presented in Section~\ref{sec:Derivation}. Section~\ref{sec:Implementation} elaborates on the complete implementation pipeline, from solving for the cross field to the final quadrilateral mesh generation. Comprehensive numerical validation is provided in Section~\ref{sec:Numerical}, demonstrating the method's robustness, adaptability to boundary conditions, controllability over singularity placement, and performance on a composite geometric domain. We conclude in Section~\ref{sec:conclusions} with a summary of our key contributions and a discussion of potential future research directions.

\section{Quad meshes, cross fields, and the Ginzburg--Landau model}
\label{sec:foundation}
This section reviews the topological foundations of quadrilateral meshing, focusing on the constraints imposed by the Euler characteristic. These topological invariants necessitate the presence of irregular vertices and connecting separatrices in any quadrilateral mesh of a general domain. A principled approach to generating such fields is provided by the Ginzburg--Landau model, a variational framework that ensures smoothness and topological alignment. Building upon this foundation, we develop a fast and robust computational framework for cross field generation and subsequent quad mesh construction.

\begin{figure}[t!]
  \centering
  \includegraphics[scale=0.15]{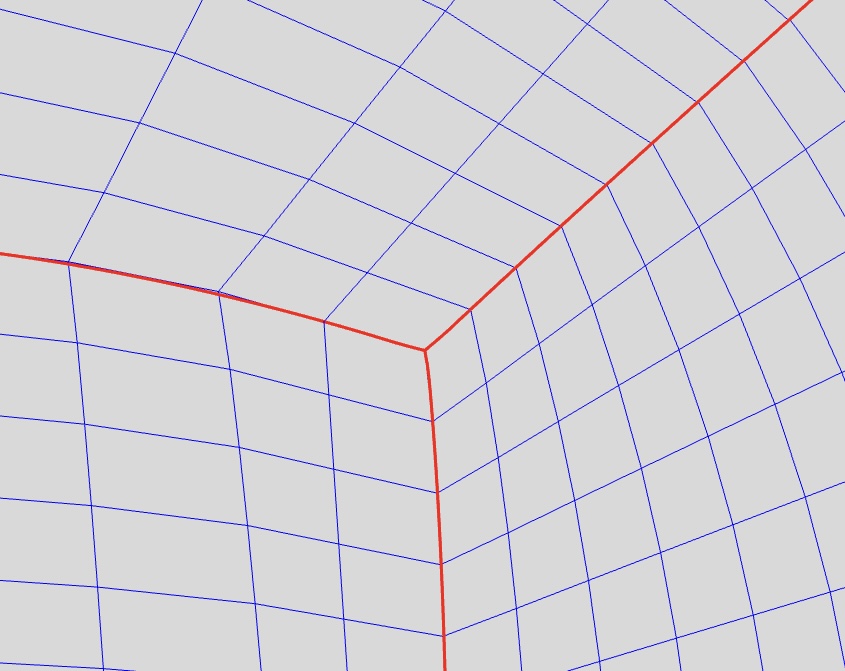}
  \hspace{0.5cm} 
  \includegraphics[scale=0.15]{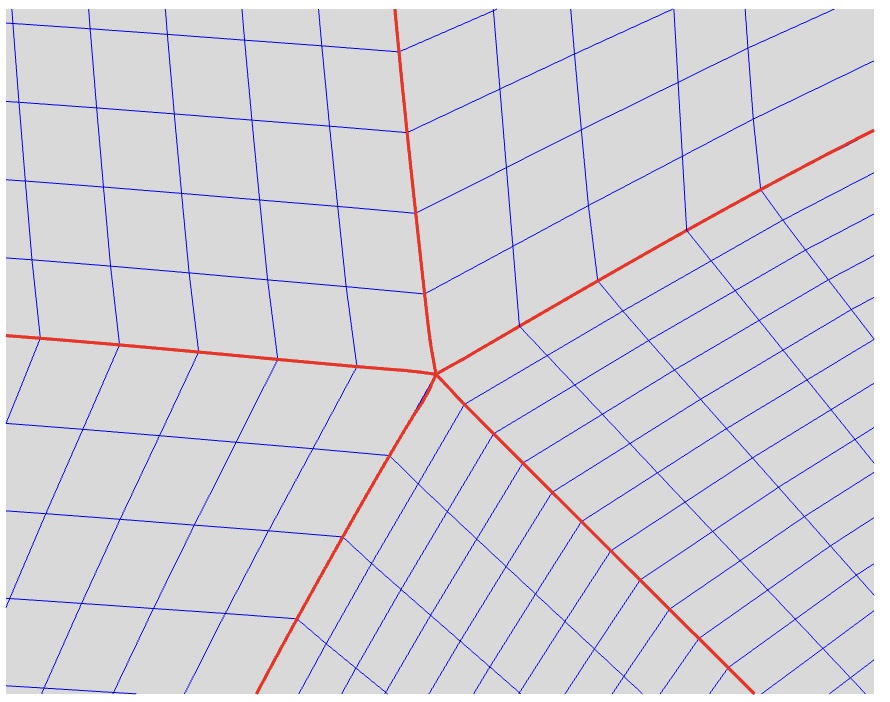}
  \caption{Examples of irregular points in quadrilateral mesh topology. 
(a) A valence-3 singularity, and (b) a valence-5 singularity. 
These irregular points deviate from the regular valence-4 configuration 
and are necessary to satisfy topological constraints in complex domains. See Section~\ref{sec:constraints}.}
  \label{fig:irrgularpoints}
\end{figure}

\subsection{Quadrilateral mesh generation constraints}
\label{sec:constraints}

Let $\mathcal{M} = (\mathcal{V}, \mathcal{E})$ denote a quadrilateral mesh with vertex set $\mathcal{V}$ and edge set $\mathcal{E}$. For any $v \in \mathcal{V}$, the \emph{valence} (or \emph{degree}) $d(v)$ is defined as the number of edges incident to $v$. We partition $\mathcal{V}$ into the set of interior vertices $\mathcal{V}^\circ$ and boundary vertices $\partial\mathcal{V}$. A vertex $v$ is said to be \emph{regular} if its valence matches the ideal configuration of a structured mesh, namely
\begin{equation*}
d(v) = \bar{d}(v) := 
\begin{cases} 
4, & v \in \mathcal{V}^\circ, \\ 
2, & v \in \partial\mathcal{V}.
\end{cases}
\end{equation*}
Vertices where $d(v) \neq \bar{d}(v)$ are referred to as \emph{extraordinary} (or \emph{irregular}) nodes; typical configurations for $d(v) \in \{3, 5\}$ are illustrated in \cref{fig:irrgularpoints}. To quantify the local topological deficit, we define the \emph{vertex index} $\mathcal{I}(v)$ as
\begin{equation*}
\mathcal{I}(v) = \frac{1}{4} \left( \bar{d}(v) - d(v) \right).
\end{equation*}

Furthermore, the indices of irregular vertices must satisfy a global topological constraint governed by the Euler characteristic of the domain, a fundamental topological invariant. In other words, the sum of the indices over all mesh vertices must equal the Euler characteristic \( \chi(D) \) of the domain \( D \) \cite{BEAUFORT2017219,fogg2017simple}, where \( b \) is the number of boundary components,
\[\sum_{v \in \mathcal{V}} \mathcal{I}(v) = \chi(D) = 2 - b.\]
It is worth emphasizing that this index constraint, while a necessary condition for generating a valid quadrilateral mesh, is not sufficient. In practical settings, boundaries with sharp corners and curved domain boundaries may introduce additional irregular vertices in order to preserve mesh quality and minimize distortion~\cite{Blacker1991,fogg2017simple}. 

The concept of cross fields offers a mathematical basis for handling these constraints by representing the target mesh orientation as a continuous field. This approach enables the implicit modeling of irregular vertices, where the streamline of the field guides the quadrilateral alignment. Consequently, this representation enables the implicit detection and computation of singularities associated with irregular vertices.

\begin{figure}[t!]
    \centering
    \begin{subfigure}[b]{0.22\textwidth}
        \includegraphics[width=\linewidth, height=3.1cm, keepaspectratio]{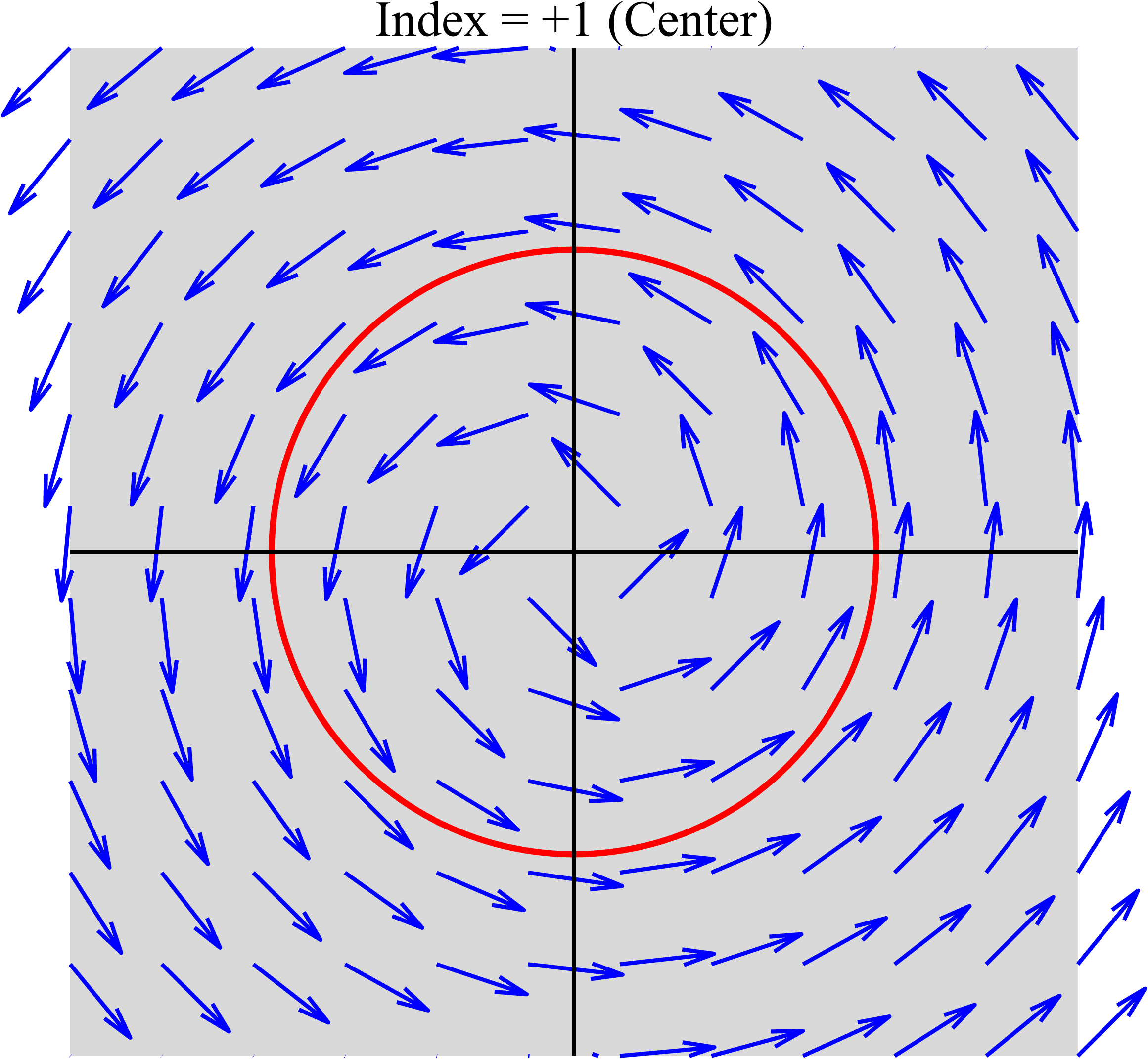}
        \caption{$+1$}
        \label{fig:general_field_index=1}
    \end{subfigure}
    \hspace{0.01\textwidth}
    \begin{subfigure}[b]{0.22\textwidth}
        \includegraphics[width=\linewidth, height=3.5cm, keepaspectratio]{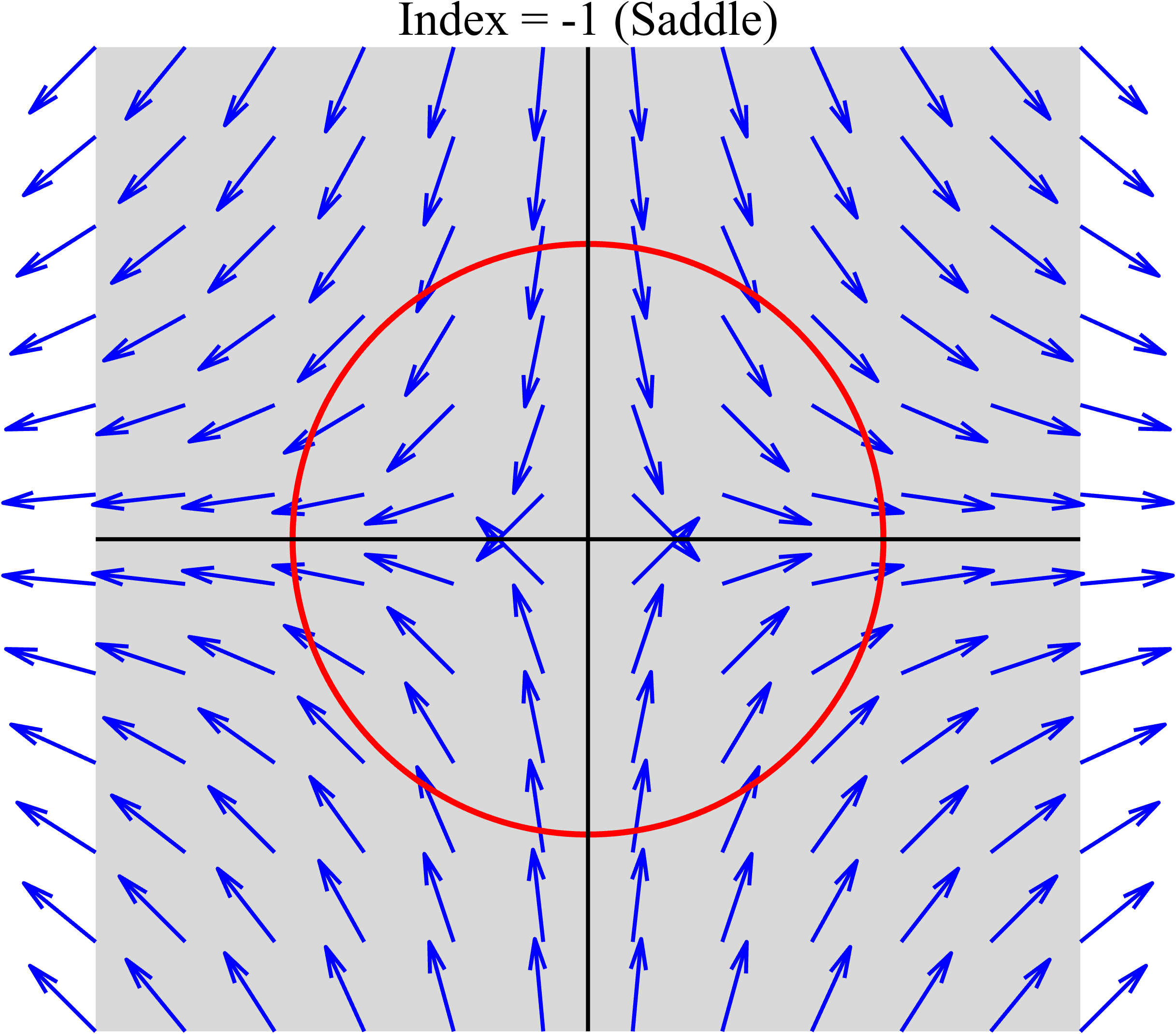}
        \caption{$-1$}
        \label{fig:general_field_index=-1}
    \end{subfigure}
    \hspace{0.01\textwidth}
    \begin{subfigure}[b]{0.22\textwidth}
        \includegraphics[width=\linewidth, height=3.0cm, keepaspectratio]{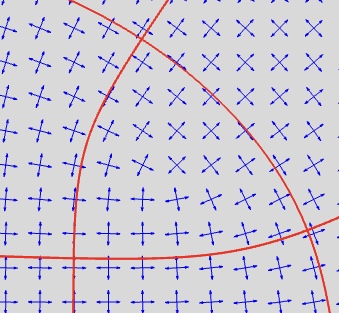}
        \caption{$\frac{1}{4}$}
        \label{fig:corss_fields_index_1/4}
    \end{subfigure}
    \hspace{0.01\textwidth}
    \begin{subfigure}[b]{0.22\textwidth}
        \includegraphics[width=\linewidth, height=0.85\linewidth]{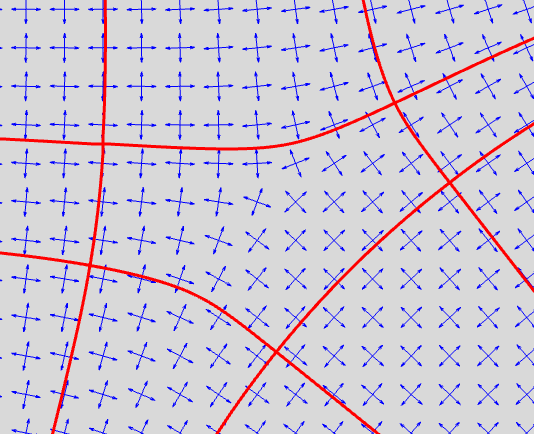}
        \caption{$-\frac{1}{4}$}
        \label{fig:corss_fields_index_-1/4}
    \end{subfigure}
    \caption{Comparison of singularities in vector and cross fields. 
(a) and (b) show classical vector field singularities with index $+1$ (center) and $-1$ (saddle), 
where the field completes a full rotation around the singularity. 
(c) and (d) depict cross field singularities with index $+1/4$ and $-1/4$, 
in which the cross field undergoes a quarter-turn counterclockwise or clockwise, respectively. See Section~\ref{sec:crossfield}.}
    \label{fig:singularities_of_general_and_cross_fields}
\end{figure}

\subsection{Cross fields}

\label{sec:crossfield}
Let $\Omega \subset \mathbb{R}^2$ be a compact planar domain with a piecewise smooth boundary. A \textbf{cross} is defined as an element of the quotient space $\mathbb{S}^1 \big/ \mathbb{Z}_4$. Here, the cyclic group $\mathbb{Z}_4 \cong \{0, 1, 2, 3\}$ acts on the unit circle $\mathbb{S}^1 \subset \mathbb{C}$ via the rotation map $k \cdot z = e^{i k \pi / 2} z$ for $z \in \mathbb{S}^1$ and $k \in \mathbb{Z}_4$. It reflects the \( \frac{\pi}{2} \)-rotational symmetry of the circle. The cross field could be defined as a map:
\[
f : \Omega \rightarrow \mathbb{S}^1 / \mathbb{Z}_4  \ \cup \{\mathbf{0}\}.
\]
Points where the function $f$ maps to zero are called singularities, where the field is non-smooth or ill-defined. Away from finite singularities, $f$ remains continuous and integrable in $\Omega$~\cite{BEAUFORT2017219,jezdimirovic2019mul}. 

To overcome the computational difficulties arising from the rotational symmetry of cross fields, one can introduce the representation field as a mathematical formulation. Let $\theta(p) \in [0, \pi/2)$ denote the principal argument of the cross vector at point $p$, then the representation field is defined to be the map $R: \mathbb{S}^1/\mathbb{Z}_4 \rightarrow \mathbb{S}^1$, 
\[
R(f(p)) = e^{i4\theta(p)} \in \mathbb{C}, \quad \theta(p) \in [0, \frac{\pi}{2}).
\]
Within the representation field, singularities are identified as points where the index, or winding number, is non-zero. Let $\gamma: [0,1] \to \mathbb{R}^2$ be a simple, closed, piecewise $C^1$ curve with $\gamma(0) = \gamma(1)$, and $p \in \text{int}(\gamma)$. The index at point $p$ is defined as the integral of the argument along the contour $\gamma$,
\[
\text{Ind}_{\text{R}}(p) = \frac{1}{2\pi} \oint_{\gamma} d\theta (p).
\]
From the argument mapping $\theta_r(p) = 4 \theta(p)$, the loop integral (index) of the representation field can be extended to define the index of the cross field $\text{Ind}_{f}(p) = \frac{1}{4} \text{Ind}_{\text{R}}(p)$. The cross field indices coincide with the definition of irregular mesh vertices in Section~\ref{sec:constraints}; via the Poincar\'e--Hopf theorem, this ensures that the configuration of cross field singularities satisfies global topological constraints of quad mesh. A visual comparison of the singularities in cross fields and vector fields is illustrated in~\Cref{fig:singularities_of_general_and_cross_fields}. 

Based on the cross-field, we define the cross-field streamlines, which serve as another mathematical foundation for domain decomposition. Formally, a streamline $\gamma(t)$ is an integral curve that satisfies the following constraint:
\begin{equation}
    \frac{d\gamma}{dt} \in \left\{ R_k(\mathbf{f}(\gamma(t))) \mid k \in \{0, 1, 2, 3\} \right\},
    \label{eq:streamline_inclusion}
\end{equation}
where $\mathbf{f}(\cdot)$ represents the cross field and $R_k$ maps the field to one of its four symmetric branches. To trace a specific trajectory, we determine the branch index $k$ locally to enforce continuity by minimizing the deviation from the current propagation direction
\begin{equation}
\label{eq:streamline condition}
    \begin{aligned}
    k &= \arg \min_{j \in \{0,1,2,3\}} 
\ d_{\text{cross}}\left( R_j(\theta(\gamma(t))),\ \arg(\dot{\gamma}(t)) \right),
\end{aligned}
\end{equation}
where the distance function $d_{\text{cross}}$ is \[d_{\text{cross}}(\theta_1, \theta_2) = \min_{m \in \mathbb{Z}} \left| \theta_1 - \theta_2 - m \frac{\pi}{2} \right|.\]


\subsection{The Ginzburg--Landau model for cross fields}
\label{sec:Ginzburg}
To compute the representation field described in Section~\ref{sec:crossfield}, a widely adopted approach is to identify the field with a vector function $\mathbf{u} \in L^2(\Omega_1;\mathbb{R}^2)$ and employ the Ginzburg--Landau model. Originating from the theory of superconductivity~\cite{bethuel1994ginzburg,Ginzburg2009}, this classical model has been extensively applied to the design of direction fields with topological singularities~\cite{BEAUFORT2017219,kowalski2013pde,Osting2019}. Mathematically, the Ginzburg--Landau energy arises as a relaxation of \eqref{eq:originproblem}, where the strict unit-length constraint is replaced by a penalization term. This relaxed formulation not only facilitates the analysis of solution regularity but also accommodates the existence of singularities, thereby enabling the optimization of their spatial distribution. Consequently, the Ginzburg--Landau framework serves as the theoretical foundation and primary motivation for the method proposed in this work.

As established in \cite{bethuel1994ginzburg,Osting2019}, for a boundary condition 
\(\mathbf{g} : \partial \Omega \to \mathbb{S}^1\) with non-zero degree, the minimizers 
of the Ginzburg--Landau energy converge (as \(\varepsilon \to 0\)) to a canonical harmonic 
map \(\mathbf{u}_\star\). This limit map is smooth on 
\(\Omega \setminus \{ a_i \}_{i=1}^N\) and has a finite set of topological singularities 
\(\{ a_i \}\). Crucially, the limiting harmonic map $\mathbf{u}_\star$ exhibits a fundamental property: its Dirichelt energy is predominantly concentrated in the neighborhood of the singularities. This phenomenon implies that the strategic placement of irregular vertices (singularities) is primary, while the field remains smooth and uniform elsewhere. The asymptotic analysis for $\varepsilon$ further demonstrates how \eqref{eq:originproblem} determines both the locations and indices of these singularities. In this limit, the energy expansion takes the following asymptotic form:
\begin{equation*}
E^{\mathrm{GL}}_{\varepsilon}(\mathbf{u}_\varepsilon) = \pi \sum_{i=1}^N d_i^2 \log\left(\frac{1}{\varepsilon}\right) + W(\{a_i\}) + o(1) \quad \text{as }\, \varepsilon \rightarrow 0,
\end{equation*}
where \(d_i \in \mathbb{Z}\) denotes the index of the singularity at \(a_i\) and \(W(\{a_i\})\) is the renormalized energy. In this asymptotic expansion, the first term determines the indices of the singularities while the second term governs their locations. According to the Ginzburg--Landau theory, only low-order singularities with index $\pm\frac{1}{4}$ are produced. Such low-index singularities correspond to mild irregularities in the resulting mesh and help to minimize distortion, making them desirable for high-quality quadrilateral meshing. The renormalized energy $W(\{a_i\})$ imposes two key constraints on the location of singularities: (i)~singularities should not be placed too close to each other; and (ii)~singularities should stay away from the boundary.

In summary, while \eqref{eq:originproblem} provides a theoretically elegant framework for constraining singularity configurations, its practical application in geometric settings requires careful examination. Beyond the computational challenges discussed in Section~\ref{sec:crossfield}, a core limitation lies in the model's renormalized energy, which lacks an intrinsic mechanism to effectively balance the competing penalties (i) and (ii). An imbalance in this trade-off can lead to distorted mesh elements with suboptimal aspect ratios. In addition, practical application requires stable parameter choices that provide designers with intuitive control over the singularity layout. Another critical observation is that imposing a Dirichlet boundary condition is often overly restrictive for mesh generation, since strict boundary constraints are not essential for determining interior singularity locations and separatrix structures. To address these limitations, we introduce a new model in the following section.

\section{The proposed modified Ginzburg--Landau energy}\label{sec:DM}
This section presents the derivation of a modified Ginzburg--Landau energy model for computing cross fields. Our approach begins with a computationally efficient refinement of the traditional diffuse domain method. By combining this with the exponential of the Laplacian operator, we approximate and reformulate the original problem \eqref{eq:originproblem} on an extended rectangular domain $\Omega_2$. This reformulation yields a modified Ginzburg--Landau model with favorable mathematical properties. Specifically, we prove the existence of a solution with the unit-norm constraint in a well-defined space. These theoretical results provide the foundation for the stable iterative numerical methods introduced subsequently.

\subsection{Convolution-based diffuse domain method}\label{sec:CDDM}

We introduce a convolution-based diffuse domain method as an effective and robust approach that enables fast computation while preserving stability and adaptability to complex geometries. The diffuse domain method (DDM) is a numerical technique for solving partial differential equations in domains with general geometries \cite{guo2021diffuse,li2009solving}. By extending the governing equations to a regular computational domain, the use of body-fit meshes is avoided. The reformulated equation incorporates a lower-order term scaled by a smooth function $\phi(x)$, which varies according to different boundary conditions.

The generation of cross fields is formulated as a constrained minimization problem of the Dirichlet energy. Ideally, we seek a representation field $\mathbf{u} \in H^1(\Omega_1; \mathbb{R}^2)$ that approximates the solution of~\eqref{eq:originproblem}. To address the numerical difficulties imposed by the pointwise unit-norm constraint, we employ an operator splitting strategy. By initially relaxing the constraint, the problem reduces to the unconstrained minimization of the Dirichlet energy. The associated Euler-Lagrange equation is the Laplace equation, $\Delta \mathbf{u} = 0$, subject to the prescribed Dirichlet boundary conditions.

Subsequently, to accommodate the geometric complexity of $\Omega_1$ and efficiently enforce Dirichlet boundary conditions on irregular interfaces, we adopt the diffuse domain method. Let $\Omega_1$ be a bounded domain embedded within a larger, regular computational domain $\Omega_2$ (See \cref{fig:diagram} for example). To facilitate the solution of the governing equation on the extended domain $\Omega_2$, the following formulation has proven to be both effective and robust \cite{li2009solving,yu2020higher}:
\begin{equation}
\label{eq:approximaton2}
    \Delta \mathbf{u} - \epsilon^{-3} B(\phi)(\mathbf{u} - \mathbf{g}) = \mathbf{0}.
\end{equation}
Here, $\phi$ is a phase-field function that provides a smooth approximation of the characteristic function $\chi_{\Omega_1}$ for the domain $\Omega_1$. The term $B(\phi)$, typically defined as $\phi^2(1-\phi)^2$, approximates the boundary Dirac delta function $\delta_{\partial \Omega_1}$ concentrated on the boundary. Furthermore, $\mathbf{g}$ represents an extension of the boundary data $\mathbf{u}|_{\partial \Omega_1}$ to a tubular neighborhood of $\partial \Omega_1$.

A commonly used construction of the corresponding phase-field function $\phi$ is based on the signed distance function $r(x)$ to the region $\Omega_1$, which possesses favorable analytical properties \cite{Abels2015,burger2017analysis},
\begin{equation*}
\label{eq:tanhphasefunction}
    \phi(x) := \frac{1}{2} \left( 1 - \tanh\left(\frac{3r(x)}{\varepsilon}\right) \right), \quad x \in \Omega_2.
\end{equation*} 
However, the construction of such a signed distance function is highly sensitive to boundary regularity \cite{burger2017analysis} and  nontrivial for complex geometries~\cite{peng1999pde,sussman1999efficient}. 
The phase-field function generated via the convolution approach shares essential properties with those constructed from signed distance functions, such as smoothness, monotonicity across the interface, and a controllable transition width. A standard approach involves smoothing the characteristic function of the domain using a Gaussian kernel, defined as
\[
G_\varepsilon(x) = \frac{1}{(2\pi \varepsilon^2)^{d/2}} \exp\left( -\frac{|x|^2}{2\varepsilon^2} \right), \quad \phi(x) = \left( G_\varepsilon * \chi_{\Omega_1} \right)(x) = \int_{\mathbb{R}^d} G_\varepsilon(x - y) \, \chi_{\Omega_1}(y) \, \mathrm{d}y,
\]
where $\varepsilon > 0$ determines the width of the transition layer and $d$ denotes the spatial dimension. Compared to signed distance formulations, this convolution-based construction offers significant computational advantages: it is straightforward to implement, avoids the need for solving auxiliary PDEs or optimization problems, and is naturally compatible with structured grids. For discussions on more robust or higher-order smoothing techniques, we refer the reader to~\cite{Geusebroek2003,he2012guided}.
\begin{figure}[t!]
  \centering
  \label{fig:DDM}\includegraphics[scale=0.3]{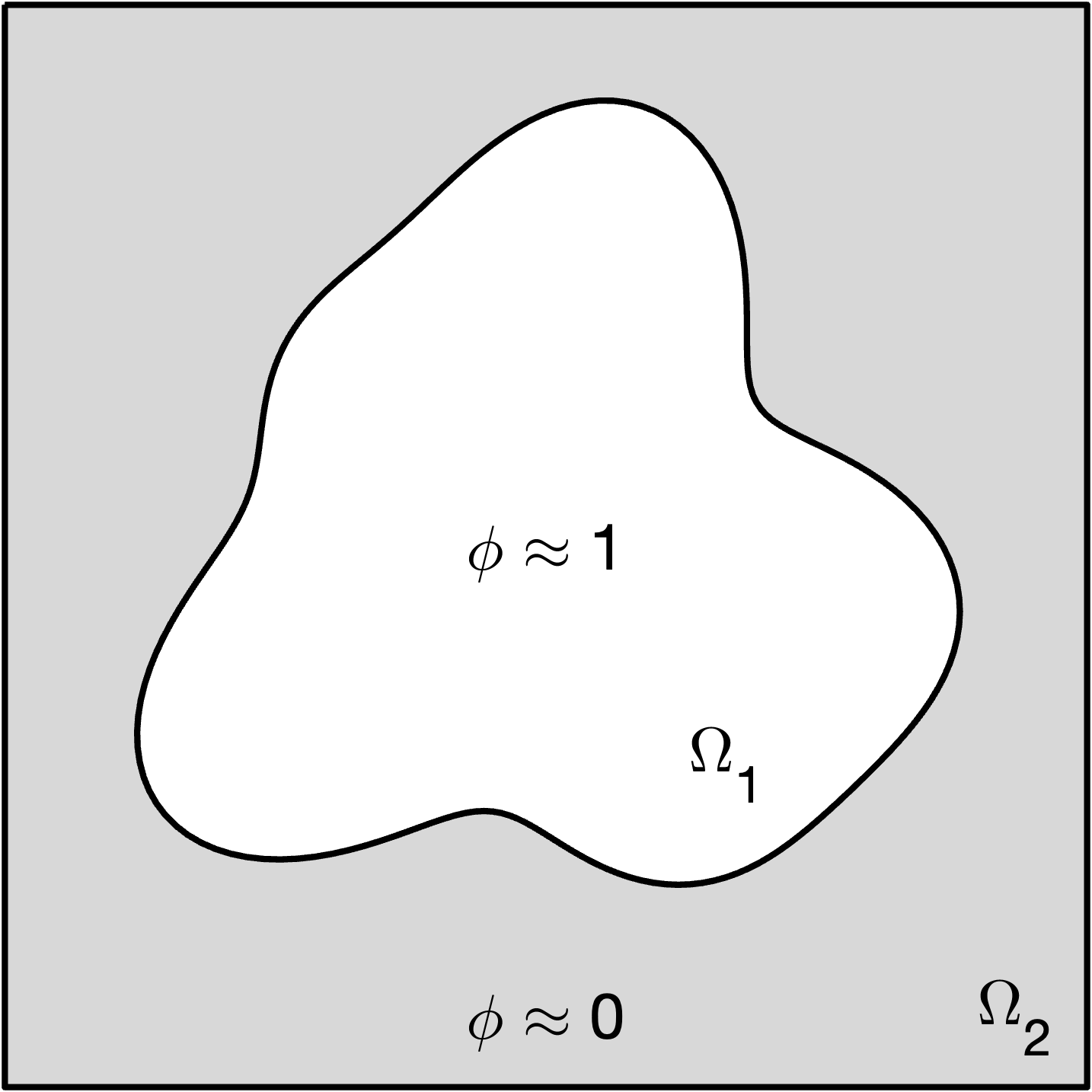}
  \caption{Domain $\Omega_1 \subset \Omega_2 \subset \mathbb{R}^2$ described by a phase-field function $\phi$. See Section~\ref{sec:CDDM}.} \label{fig:diagram}
\end{figure} 

\subsection{Modified energy functional with exponential of the Laplacian}
For the original formulation, the solution corresponds to the minimizer of the following generalized energy functional:
\begin{equation*}    
 \mathbf{u} = \arg\min_{\mathbf{v} \in \mathcal{A}} \; E(\mathbf{v}),
\end{equation*}
where $E(\mathbf{v})$ is the Dirichlet energy, 
\begin{equation*}
    E(\mathbf{v}) = \frac{1}{2} \int_{\Omega_1} |\nabla \mathbf{v}|^2 \, \mathrm{d}x
\end{equation*}
and 
\[\mathcal{A} = \left\{ \mathbf{v} \in H^1(\Omega_1; \mathbb{R}^2) \, \, | \quad \mathbf{v}|_{\partial \Omega_1} = \mathbf{g}, \quad |\mathbf{v}(x)| = 1 \, \, \text{ for \ a.e. } \, \, x \in \Omega_1 \right\} . \]
In the case of \eqref{eq:approximaton2} within the diffuse domain method, this can be interpreted as augmenting the Dirichelt energy with a penalty term, resulting in the following modified energy functional,
\begin{equation}
\label{eq:modiproblem}
E(\mathbf{u}) = \frac{1}{2} \int_{\Omega_2} |\nabla \mathbf{u}(x)|^2 \, \mathrm{d}x 
+ \frac{1}{2} \int_{\Omega_2} \varepsilon^{-3} B(\phi(x))\, |\mathbf{u}(x) - \mathbf{g}(x)|^2 \, \mathrm{d}x.
\end{equation}
When the domain $\Omega_2$ is chosen to be sufficiently large, the influence of its boundary conditions becomes negligible. Therefore, we impose the periodic boundary conditions , leading to the following identity by Green's formula,
\begin{equation}
\label{eq:inte}
\int_{\Omega_2} |\nabla \mathbf{u}|^2 \, \mathrm{d}x = \int_{\Omega_2} - \langle \mathbf{u}, \Delta \mathbf{u} \rangle \, \mathrm{d}x,
\end{equation}
where $\langle \cdot, \cdot \rangle$ denotes the pointwise Euclidean inner product. 
For $\tau > 0$, $e^{\frac{\tau}{2}\Delta} \mathbf{u}$ denotes the solution at $t = \tau/2$ of the following free space heat diffusion equation:
\begin{equation*}
\begin{cases}
\partial_t \mathbf{u} = \Delta \mathbf{u}, \\
\mathbf{u}(x, t=0) = \mathbf{u}(x),
\end{cases}
\end{equation*}
which can also explicitly be written by $\mathbf{u}(x, \tau/2) = G_{\tau/2} * \mathbf{u}$. Regarding the Laplace operator, we recall the following asymptotic expansion for the heat semigroup \cite{wang2022efficient}:
\begin{equation}
\label{eq:diffusion}
e^{\frac{\tau}{2} \Delta} \mathbf{u} = \mathbf{u} + \frac{\tau}{2} \Delta \mathbf{u} + o(\tau), \quad \text{as } \tau \to 0
\end{equation}
and thus 
\begin{align*}
\langle e^{\frac{\tau}{2} \Delta} \mathbf{u}, e^{\frac{\tau}{2} \Delta} \mathbf{u} \rangle & = \|\mathbf{u}\|^2 + \tau \langle \mathbf{u}, \Delta \mathbf{u} \rangle + o(\tau)   \nonumber \\
& = 1 + \tau \langle \mathbf{u}, \Delta \mathbf{u} \rangle + o(\tau)  \quad \text{as } \tau \to 0.
\end{align*}
Substituting \eqref{eq:inte} and \eqref{eq:diffusion} into \eqref{eq:modiproblem} and dropping some terms independent of $\mathbf{u}$, we obtain an approximation energy:
\begin{equation*}
E_{\tau,\epsilon}(\mathbf{u}) = \int_{\Omega_2} 
- \frac{1}{2\tau} \langle e^{\frac{\tau}{2} \Delta} \mathbf{u}, e^{\frac{\tau}{2} \Delta} \mathbf{u} \rangle 
- \frac{1}{\varepsilon^3} B(\phi) \langle \mathbf{u}, \mathbf{g} \rangle \, \mathrm{d}x.
\end{equation*}
It's straightforward that we have the following facts of the exponential of the Laplacian operator:
\begin{subequations} \label{eq:operator_properties}
    \begin{align}
        \|e^{\frac{\tau}{2} \Delta} \mathbf{u}\|_{L^2(\Omega)} &\leq \|\mathbf{u}\|_{L^2(\Omega)}, \label{eq:contraction} \\
        \langle e^{\tau \Delta} \mathbf{u}, \, \mathbf{v} \rangle &= \langle \mathbf{u}, \, e^{\tau \Delta} \mathbf{v} \rangle, \label{eq:symmetry}
    \end{align}
\end{subequations}
for $\mathbf{u}~,\mathbf{v} \in L_2(\Omega;\mathbb{R}^2)$, where $\Omega$ is a square region. Based on these facts, we have the following lemma concerning the energy functional $E_{\tau,\epsilon}(\mathbf{u})$.

\begin{lemma}\label{lemma:energy}
Let $\Omega_2 \subset \mathbb{R}^2$ be a open, connect and bounded domain with Lipschwz boundary, and let $\mathbf{u} \in L^2(\Omega_2; \mathbb{R}^2)$. Assume that $e^{\tau \Delta}$ is the heat semigroup operator with $\tau > 0$, and that $B(\phi) \in L^\infty(\Omega_2)$, $\mathbf{g} \in L^\infty(\Omega_2; \mathbb{R}^2)$. Then the energy functional $E_{\tau,\varepsilon}(\mathbf{u})$
satisfies the following properties:
\begin{enumerate}
    \item \label{item:boundedness} For fixed $\phi$ and $\varepsilon > 0$, $E_{\tau,\varepsilon}(\mathbf{u})$ is bounded from below on the admissible set 
    \begin{equation*}
        \mathcal{C} := \{ \mathbf{u} \in L^2(\Omega_2; \mathbb{R}^2) \mid \quad | \mathbf{u}(x)| = 1 \text{ for a.e. } x \in \Omega_2 \}.
    \end{equation*} 
    
    \item \label{item:continuity} $E_{\tau,\varepsilon}(\mathbf{u})$ is continuous with respect to $\mathbf{u} \in L^2(\Omega_2; \mathbb{R}^2)$.
    
    \item \label{item:concavity} $E_{\tau,\varepsilon}(\mathbf{u})$ is concave with respect to $\mathbf{u} \in L^2(\Omega_2; \mathbb{R}^2)$.
    
    \item \label{item:derivative} The Fréchet derivative of $E_{\tau,\varepsilon}$ at $\mathbf{u}$ in the direction $\mathbf{v} \in L^2(\Omega_2; \mathbb{R}^2)$ is given by
    \begin{equation*}
\int_{\Omega_2} \left\langle \dfrac{\delta E_{\tau,\epsilon}(\mathbf{u})}{\delta \mathbf{u} }, \mathbf{v} \right\rangle \mathrm{d} x  = \int_{\Omega_{2}} \left\langle \mathbf{v}, -\frac{1}{\tau} e^{\tau \Delta} \mathbf{u} - \frac{1}{\varepsilon^{3}} B(\phi)\mathbf{g} \right\rangle \, \mathrm{d} x.
    \end{equation*}
\end{enumerate}

\end{lemma}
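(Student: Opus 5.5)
The plan is to write the energy as the sum of a quadratic part and a linear part,
\[
E_{\tau,\varepsilon}(\mathbf{u}) = -\frac{1}{2\tau}\,\|e^{\frac{\tau}{2}\Delta}\mathbf{u}\|_{L^2(\Omega_2)}^2 \;-\; \frac{1}{\varepsilon^{3}}\int_{\Omega_2} B(\phi)\langle \mathbf{u},\mathbf{g}\rangle\,\mathrm{d}x =: Q(\mathbf{u}) + L(\mathbf{u}),
\]
and to verify the four properties separately for $Q$ and $L$. The analytic facts needed are the contraction property \eqref{eq:contraction}, the self-adjointness \eqref{eq:symmetry}, and the semigroup identity $e^{\frac{\tau}{2}\Delta}e^{\frac{\tau}{2}\Delta}=e^{\tau\Delta}$. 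Together these give
\[
\|e^{\frac{\tau}{2}\Delta}\mathbf{u}\|^2 = \langle \mathbf{u}, e^{\tau\Delta}\mathbf{u}\rangle .
\]
The semigroup property holds on the periodic or extended square domain that the paper actually uses.

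\emph{Boundedness.} For $\mathbf{u}\in\mathcal{C}$ we have $|\mathbf{u}|=1$ a.e., so $\|\mathbf{u}\|_{L^2}^2=|\Omega_2|$. By \eqref{eq:contraction}, $Q(\mathbf{u})\ge -|\Omega_2|/(2\tau)$. By Cauchy--Schwarz pointwise, $|L(\mathbf{u})|\le \varepsilon^{-3}\|B(\phi)\|_{L^\infty}\|\mathbf{g}\|_{L^\infty}|\Omega_2|$. Adding the two bounds gives an explicit lower bound.

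\emph{Continuity.} $L$ is a bounded linear functional, since $B(\phi)\mathbf{g}\in L^\infty\subset L^2$ on the bounded domain $\Omega_2$. For $Q$, I would use bilinearity:
\[
\bigl|\|e^{\frac{\tau}{2}\Delta}\mathbf{u}\|^2-\|e^{\frac{\tau}{2}\Delta}\mathbf{w}\|^2\bigr| \le \|e^{\frac{\tau}{2}\Delta}(\mathbf{u}-\mathbf{w})\|\,\|e^{\frac{\tau}{2}\Delta}(\mathbf{u}+\mathbf{w})\| \le \|\mathbf{u}-\mathbf{w}\|\,(\|\mathbf{u}\|+\|\mathbf{w}\|).
\]
This makes $Q$ locally Lipschitz in $L^2$.

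\emph{Concavity.} $L$ is linear, so it suffices to show $-Q$ is convex. The map $\mathbf{u}\mapsto \|A\mathbf{u}\|^2$ with $A=e^{\frac{\tau}{2}\Delta}$ linear is convex, by the identity
\[
\lambda\|A\mathbf{u}\|^2+(1-\lambda)\|A\mathbf{w}\|^2-\|A(\lambda\mathbf{u}+(1-\lambda)\mathbf{w})\|^2 = \lambda(1-\lambda)\|A(\mathbf{u}-\mathbf{w})\|^2\ge 0.
\]
Hence $Q=-\frac{1}{2\tau}\|A\mathbf{u}\|^2$ is concave, and so is $E_{\tau,\varepsilon}=Q+L$.

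\emph{Fréchet derivative.} Expand
\[
E_{\tau,\varepsilon}(\mathbf{u}+\mathbf{v})-E_{\tau,\varepsilon}(\mathbf{u}) = -\frac{1}{\tau}\langle A\mathbf{u},A\mathbf{v}\rangle-\frac{1}{2\tau}\|A\mathbf{v}\|^2-\varepsilon^{-3}\int B(\phi)\langle\mathbf{v},\mathbf{g}\rangle .
\]
By \eqref{eq:symmetry} and the semigroup property, $\langle A\mathbf{u},A\mathbf{v}\rangle=\langle \mathbf{v},e^{\tau\Delta}\mathbf{u}\rangle$. The remainder satisfies $\frac{1}{2\tau}\|A\mathbf{v}\|^2\le \frac{1}{2\tau}\|\mathbf{v}\|^2=o(\|\mathbf{v}\|)$, which yields the stated formula. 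The map $\mathbf{v}\mapsto\langle \mathbf{v},-\tau^{-1}e^{\tau\Delta}\mathbf{u}-\varepsilon^{-3}B(\phi)\mathbf{g}\rangle$ is bounded linear, so this is indeed the Fréchet derivative.

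The main obstacle is justifying \eqref{eq:operator_properties} and the semigroup identity on $\Omega_2$. The lemma states $\Omega_2$ as a general Lipschitz domain, while the paper only asserts these facts for a square. I would interpret $e^{\tau\Delta}$ as the periodic (or Neumann) heat semigroup, which is self-adjoint, contractive and a semigroup on $L^2(\Omega_2)$ by spectral calculus of the self-adjoint, nonpositive Laplacian. Alternatively, one can take free-space convolution with $G$ applied to the zero extension, which is also symmetric and contractive since $\widehat{G}\in(0,1]$. With either reading, everything else is routine.
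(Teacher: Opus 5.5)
Your proof is correct and follows the same route as the paper: split into a quadratic part plus a linear part, use contraction for the lower bound, a difference-of-squares estimate for continuity, convexity of the quadratic form for concavity, and direct expansion for the derivative. It is in fact more careful than the paper's, since you use $\|B(\phi)\|_{L^\infty}\|\mathbf{g}\|_{L^\infty}$ as the hypotheses allow, claim only local Lipschitz continuity (the paper's global constant cannot hold for a quadratic functional on all of $L^2$), and make explicit the semigroup identity the paper uses silently when it passes between $e^{\frac{\tau}{2}\Delta}$ and $e^{\tau\Delta}$. One caveat on your closing remark: free-space convolution of the zero extension, restricted to $\Omega_2$, is symmetric and contractive but not a semigroup, so $\|e^{\frac{\tau}{2}\Delta}\mathbf{u}\|_{L^2(\Omega_2)}^2=\langle \mathbf{u},e^{\tau\Delta}\mathbf{u}\rangle$ fails there (it holds only with the norm over $\mathbb{R}^2$), item 4 would then give $(e^{\frac{\tau}{2}\Delta})^2\mathbf{u}$ instead of $e^{\tau\Delta}\mathbf{u}$, and you should commit to the periodic/Neumann reading.
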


\begin{proof}
    1. From the definition, $B(\phi) = \phi^2(1-\phi)^2 \leq1 $ and $\mathbf{g}$ is the extension of unit norm vectors from boundary to the computational domain. With proper extension mentioned in Section \ref{sec:CDDM}, we have $|\mathbf{g}|\leq 1$ for a.e. $x\in \Omega_2$. Therefore, we have 
 \[
     \int_{\Omega_2} \frac{B(\phi)}{\varepsilon^3} \langle \mathbf{u}, \mathbf{g} \rangle \, \mathrm{d}x 
     \leq \int_{\Omega_2} \frac{B(\phi)}{\varepsilon^3}  \, \mathrm{d}x 
    \leq \frac{|\Omega_2|}{\varepsilon^3}.
    \]
In addition, as for the first term, using \eqref{eq:contraction}, we have
    \[
    \int_{\Omega_2} \langle e^{\tau \Delta} \mathbf{u}, e^{\tau \Delta} \mathbf{u}\rangle \, \mathrm{d}x 
    \leq \int_{\Omega_2} |\mathbf{u}|^2 \, \mathrm{d}x 
    \leq |\Omega_2|.
    \]
   Thus $E_{\tau,\epsilon}(\mathbf{u})$ is bounded form below,
    \[
    E_{\tau,\epsilon}(\mathbf{u}) \geq -(\frac{1}{\tau}+\frac{1}{\varepsilon^3})|\Omega_2|.
    \]
  
    2. Consider $\textbf{u},\textbf{v} \in L^2(\Omega_2; \mathbb{R}^2)$ and \eqref{eq:symmetry}, we have the following fact: 
\begin{align*}
\left| E_{\tau, \varepsilon}(\mathbf{u}) - E_{\tau, \varepsilon}(\mathbf{v}) \right|
&\leq \frac{1}{\tau} \int_{\Omega_2} \left| \left\langle \mathbf{u} + \mathbf{v}, e^{\tau \Delta} (\mathbf{u} - \mathbf{v}) \right\rangle \right| 
+ \frac{1}{\varepsilon^3} B(\phi) \left| \left\langle \mathbf{u} - \mathbf{v}, \mathbf{g} \right\rangle \right| \, \mathrm{d}x \\
&\leq \left( \frac{1}{\tau} + \frac{|\Omega_2|}{\varepsilon^3} \right) \left\| \mathbf{u} - \mathbf{v} \right\|_{L^2(\Omega_2;\mathbb{R}^2)}.
\end{align*}

3. We define the energy functional as
\begin{equation*}
\label{eq:energy_functional}
E_{\tau,\epsilon}(\mathbf{u}) = -f(\mathbf{u}) - \frac{2B(\phi)}{\varepsilon^3} \langle \mathbf{u}, \mathbf{g} \rangle,
\end{equation*}
where \( f(\mathbf{u}) = \int_{\Omega_2} \langle e^{\tau \Delta}\mathbf{u}, e^{\tau \Delta} \mathbf{u} \rangle \, \mathrm{d}x \) is a convex functional on \( L^2(\Omega_2;\mathbb{R}^2) \), and \( \frac{B(\phi)}{\varepsilon^3} \langle \mathbf{u}, \mathbf{g} \rangle \) is linear in \(\mathbf{u}\). Therefore, the energy $E_{\tau,\varepsilon}$ is concave in \( L^2(\Omega_2;\mathbb{R}^2) \).

4. Direct computation yields
\begin{align*}
\int_{\Omega_2} \left\langle \dfrac{\delta E_{\tau,\epsilon}(\mathbf{u})}{\delta \mathbf{u} }, \mathbf{v} \right\rangle \mathrm{d} x
&= \lim_{a \to 0} \frac{E_{\tau,\epsilon}( \mathbf{u} + a \mathbf{v}) - E_{\tau,\epsilon}(\mathbf{u})}{a} \\
&= \int_{\Omega_2}  -\frac{1}{\tau} \langle \mathbf{v}, e^{\tau \Delta} \mathbf{u} \rangle 
- \frac{1}{\varepsilon^3} B(\phi) \langle \mathbf{v}, \mathbf{g} \rangle \, \mathrm{d} x.
\end{align*}

\end{proof}

Due to the supporting property of concave functionals, we have the following inequality,
\begin{equation*}
E_{\tau,\varepsilon}(\mathbf{v}) \leq E_{\tau,\varepsilon}(\mathbf{u}) 
+ \left\langle \frac{\delta E_{\tau,\varepsilon}(\mathbf{u})}{\delta \mathbf{u}}, \mathbf{v} - \mathbf{u} \right\rangle, 
\quad \forall\, \mathbf{u}, \mathbf{v} \in \mathcal{F}.
\end{equation*}

Let $\mathbf{u} \in L^2(\Omega; \mathbb{R}^2)$ be a vector field satisfying the unit-norm constraint $|\mathbf{u}(x)| = 1$ for a.e. $x \in \Omega_2$. The mollified field, denoted by $e^{\tau \Delta} \mathbf{u}$, is obtained through the action of the heat semigroup. Since $G_\tau > 0 \, \, \, \text{on } \Omega_2$, the zero set $\mathcal{Z} = \left\{ x \in \Omega_2 \;\middle|\; \bigl(e^{\tau\Delta}\mathbf{u}\bigr)(x) = 0 \right\}$ is of Lebesgue measure zero. This ensures that the normalization mapping $\mathcal{P}(\mathbf{v}) := \mathbf{v}/|\mathbf{v}|$ is well-defined in the $L^2$ sense for the smoothed field. In our formulation, the regularization term $\frac{1}{\varepsilon^3} B(\phi)\mathbf{g}$ could, in principle, perturb this property. However, the consistency of the proposed approach is maintained in the asymptotic limit $\varepsilon \to 0$. Specifically, the term $\frac{1}{\varepsilon^3} B(\phi)$ acts as a localized weight that concentrates onto the interface $\partial \Omega_1$ in the sense of distributions. Consequently, any potential vanishing of $\mathbf{g}$ is restricted to a set of lower dimension, thereby preserving the a.e. well-posedness of the normalization step and ensuring the structural integrity of the numerical framework.
Based on this assumption, we establish the existence of a solution to the model associated with the modified energy functional.
\begin{theorem}[Existence of $\mathbf{u}$]
     For a given $\mathbf{g}, \phi$ and $\tau,\varepsilon>0$, the minimization problem of energy $E_{\tau,\epsilon}$ subject to a unit-norm constraint admits at least one solution.
\end{theorem}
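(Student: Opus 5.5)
We need the existence of a minimizer of the concave functional $E_{\tau,\varepsilon}$ over the nonconvex set $\mathcal{C}$. The set $\mathcal{C}$ is not weakly closed: its weak closure is the convex hull $\mathcal{K}=\{\mathbf{u}\in L^2:|\mathbf{u}(x)|\le 1 \text{ a.e.}\}$. The plan is therefore to minimize over the relaxed set $\mathcal{K}$ first, and then push the minimizer back to $\mathcal{C}$ using concavity.

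\textbf{Step 1 (minimizer over $\mathcal{K}$).} By item~\ref{item:boundedness} of Lemma~\ref{lemma:energy}, $E_{\tau,\varepsilon}$ is bounded below on $\mathcal{K}$; the same estimate works since $|\mathbf{u}|\le1$. Take a minimizing sequence $\mathbf{u}_n\in\mathcal{K}$. It is bounded in $L^2(\Omega_2;\mathbb{R}^2)$, so after extraction $\mathbf{u}_n\rightharpoonup \mathbf{u}^*$ weakly in $L^2$. Because $\mathcal{K}$ is convex and strongly closed, it is weakly closed, so $\mathbf{u}^*\in\mathcal{K}$.

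Semicontinuity is more delicate, since a concave functional need not be weakly lower semicontinuous. Here, however, the linear term $-\varepsilon^{-3}\int B(\phi)\langle\mathbf{u},\mathbf{g}\rangle$ is weakly continuous because $B(\phi)\mathbf{g}\in L^\infty\subset L^2$. The quadratic term involves $e^{\frac{\tau}{2}\Delta}$, which is convolution with a smooth Gaussian kernel on the bounded (periodic) domain $\Omega_2$; it is therefore a Hilbert--Schmidt, hence compact, operator on $L^2$. Consequently $e^{\frac{\tau}{2}\Delta}\mathbf{u}_n\to e^{\frac{\tau}{2}\Delta}\mathbf{u}^*$ strongly, and $\|e^{\frac{\tau}{2}\Delta}\mathbf{u}_n\|^2$ converges. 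So $E_{\tau,\varepsilon}$ is weakly sequentially continuous on bounded sets, and $\mathbf{u}^*$ minimizes over $\mathcal{K}$. I expect this compactness step to be the main point: the argument needs the heat-kernel smoothing, and mere boundedness of the operator would not suffice.

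\textbf{Step 2 (from $\mathcal{K}$ to $\mathcal{C}$).} Next we show $\min_{\mathcal{K}}E=\inf_{\mathcal{C}}E$ and that the minimum is attained in $\mathcal{C}$. Since $\mathcal{C}$ consists of the extreme points of $\mathcal{K}$, the idea is to use concavity (item~\ref{item:concavity}) together with the supporting inequality stated after Lemma~\ref{lemma:energy}. Set $\mathbf{w}=-\frac{1}{\tau}e^{\tau\Delta}\mathbf{u}^*-\varepsilon^{-3}B(\phi)\mathbf{g}$, the Fréchet derivative from item~\ref{item:derivative}. For every $\mathbf{v}\in\mathcal{K}$,
\[
E(\mathbf{v})\le E(\mathbf{u}^*)+\int_{\Omega_2}\langle\mathbf{w},\mathbf{v}-\mathbf{u}^*\rangle\,\mathrm{d}x .
\]
Choose $\mathbf{v}=-\mathbf{w}/|\mathbf{w}|$ where $\mathbf{w}\ne0$, and $\mathbf{v}$ an arbitrary unit vector where $\mathbf{w}=0$. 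Then $\mathbf{v}\in\mathcal{C}$. Pointwise, $\langle\mathbf{w},\mathbf{v}\rangle=-|\mathbf{w}|\le\langle\mathbf{w},\mathbf{u}^*\rangle$ because $|\mathbf{u}^*|\le1$. The integral is therefore $\le0$, giving $E(\mathbf{v})\le E(\mathbf{u}^*)=\min_{\mathcal{K}}E$. Since $\mathbf{v}\in\mathcal{C}\subset\mathcal{K}$, equality holds and $\mathbf{v}$ minimizes $E_{\tau,\varepsilon}$ over $\mathcal{C}$.

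This construction handles the zero set of $\mathbf{w}$ by an arbitrary measurable unit choice, so it does not need the measure-zero discussion preceding the theorem. That discussion is only relevant for identifying $\mathbf{v}$ with the normalization map $\mathcal{P}$. The remaining check is routine: $\mathbf{v}$ is measurable and bounded, hence lies in $L^2$.
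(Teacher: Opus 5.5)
Your proof is correct, and it gets compactness by a genuinely different route from the paper.

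The paper never leaves $\mathcal{C}$. It takes the set $\tilde{\mathcal{C}}$ of normalized updates $\mathcal{P}\bigl(e^{\tau\Delta}\mathbf{u}+\frac{\tau}{\varepsilon^3}B(\phi)\mathbf{g}\bigr)$, $\mathbf{u}\in\mathcal{C}$, and asserts that these are smooth with uniformly bounded $H^1$ norm. It then minimizes the $L^2$-continuous energy over the $H^1$-closure, which is compact in $L^2$ by Rellich. Finally it shows that any $\mathbf{v}\in\mathcal{C}$ outside this set is beaten by its own normalized update, using the same linearize-and-normalize inequality as your Step 2.

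Both proofs exploit the smoothing of the heat kernel, but in different places. The paper applies it \emph{before} the nonlinear normalization and needs it to survive that map. That requires $\bigl|e^{\tau\Delta}\mathbf{u}+\frac{\tau}{\varepsilon^3}B(\phi)\mathbf{g}\bigr|$ to stay uniformly away from zero, which fails in general. The smoothed field typically has isolated zeros of nonzero degree, which is exactly where cross-field singularities sit. A unit field with nonzero winding on every small circle around a point has infinite Dirichlet energy (model case $x/|x|$), so such normalized fields are not in $H^1$.

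You apply the smoothing inside the energy, where it acts linearly. Weak compactness of the convex relaxation $\mathcal{K}$, plus compactness of $e^{\frac{\tau}{2}\Delta}$, gives weak continuity of $E_{\tau,\varepsilon}$ on $\mathcal{K}$. Your return to $\mathcal{C}$ is a constructive form of Bauer's principle: a weakly continuous concave functional on a weakly compact convex set is minimized at an extreme point. This step needs neither regularity of normalized fields nor the nonvanishing assumption. The pointwise estimate $\langle\mathbf{w},\mathbf{u}^*\rangle\ge-|\mathbf{w}|$ uses only $|\mathbf{u}^*|\le 1$, and zeros of $\mathbf{w}$ are handled by an arbitrary measurable unit choice.

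Your route also shows that the relaxation is exact, $\min_{\mathcal{K}}E_{\tau,\varepsilon}=\min_{\mathcal{C}}E_{\tau,\varepsilon}$. Meanwhile the minimizer you build is still given by the update formula \eqref{eq:iteration} applied to $\mathbf{u}^*$, so the link to the algorithm that motivated the paper's construction is kept.
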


\begin{proof}
    Denote
    $$
\tilde{\mathcal{C}} = \left\{ 
\tilde{\mathbf{u}} = (\tilde{u}_1, \tilde{u}_2) 
\;\middle|\; 
\mathbf{u} = (u_1, u_2) \in L^2(\Omega, \mathbb{R}^2),\,
\tilde{\mathbf{u}} = \frac{e^{\tau \Delta} \mathbf{u} + \frac{\tau}{\varepsilon^3} B(\phi) \mathbf{g}}{
\lvert e^{\tau \Delta} \mathbf{u} + \frac{\tau}{\varepsilon^3} B(\phi) \mathbf{g} \rvert},\,
\lvert \mathbf{u} \rvert = 1 \,  \text{ a.e.}
\right\}
$$
    and $cl(\tilde{\mathcal{C}})_{H^1(\Omega, \mathbb{R}^2)}$ as the closure of $\tilde{\mathcal{C}}$ in $H^1(\Omega, \mathbb{R}^2)$. Here we use the fact that $\tilde{\mathbf{u}}$ is actually a smooth function and its $H^1$ norm is bounded. Because $H^1(\Omega, \mathbb{R}^2)$ is compactly embedded in $L^2(\Omega, \mathbb{R}^2)$ and $cl(\tilde{\mathcal{C}})_{H_1(\Omega, \mathbb{R}^2)}$ is a closed and bounded subset in $H^1(\Omega, \mathbb{R}^2)$, $cl(\tilde{\mathcal{C}})_{H_1(\Omega, \mathbb{R}^2)}$ is then compact in $L^2(\Omega, \mathbb{R}^2)$. Moreover, because $E_{\tau,\epsilon}\left( \mathbf{\mathbf{u}} \right)$ is continuous in $L^2(\Omega, \mathbb{R}^2)$, there exists at least one $\mathbf{u}^\star \in cl(\tilde{\mathcal{C}})_{H_1(\Omega; \mathbb{R}^2)} $ such that
    \[     E_{\tau,\varepsilon}(\mathbf{u}^\star) = \inf_{\mathbf{u} \in cl(\tilde{\mathcal{C}})_{H^1(\Omega, \mathbb{R}^2)}} E_{\tau,\epsilon}(\mathbf{u}).     \] Further, it is straightforward to show $\left| \mathbf{u}^\star\right| = 1$ because $cl(\tilde{\mathcal{C}})_{H^1(\Omega, \mathbb{R}^2)} \subset cl(\mathcal{C})_{H^1(\Omega, \mathbb{R}^2)}$.

    For any $\mathbf{v} \in L^2(\Omega_2;\mathbb{R}^2)$ but not in $cl(\tilde{\mathcal{C}})_{H^1(\Omega; \mathbb{R}^2)}$, denote 
    \[
    \hat{\mathbf{v}} = \frac{e^{\tau \Delta} \mathbf{v} + \frac{\tau}{\varepsilon^3} B(\phi) \mathbf{g}}{
\lvert e^{\tau \Delta} \mathbf{v} + \frac{\tau}{\varepsilon^3} B(\phi) \mathbf{g} \rvert},
    \]
   
    we have
    \[
    -\left\langle \frac{1}{\tau}e^{\tau \Delta} \mathbf{v} + \frac{1}{\varepsilon^3} B(\phi) \mathbf{g}\,, \mathbf{v} \right\rangle \ge - \left\langle \frac{1}{\tau}e^{\tau \Delta} \mathbf{v} + \frac{1}{\varepsilon^3} B(\phi) \mathbf{g}\,, \hat{\mathbf{v}} \right\rangle
    \]
    
    and thus 

\begin{align*}
-\left\langle \frac{1}{2\tau}e^{\tau \Delta} \mathbf{v} + \frac{1}{\varepsilon^3} B(\phi) \mathbf{g}\,, \mathbf{v} \right\rangle 
&\ge -\left\langle \frac{1}{2\tau}e^{\tau \Delta} \mathbf{v} + \frac{1}{\varepsilon^3} B(\phi) \mathbf{g}\,, \mathbf{v} \right\rangle - \left\langle \frac{1}{\tau}e^{\tau \Delta} \mathbf{v} + \frac{1}{\varepsilon^3} B(\phi) \mathbf{g}\,, \hat{\mathbf{v}} - \mathbf{v} \right\rangle \\
&\ge -\left\langle \frac{1}{2\tau}e^{\tau \Delta} \hat{\mathbf{v}} + \frac{1}{\varepsilon^3} B(\phi) \mathbf{g}\,, \hat{\mathbf{v}} \right\rangle.
\end{align*}

The final inequality follows from the property that the linearization of a concave functional always bounds the functional from above. Consequently, we obtain $E_{\tau ,\varepsilon}(\mathbf{v}) \ge E_{\tau ,\varepsilon}(\hat{\mathbf{v}})$, which ensures that the minimum is indeed achieved within $cl(\tilde{\mathcal{C}})_{H^1(\Omega; \mathbb{R}^2)}$.
    
\end{proof}

\section{Derivation of the numerical scheme}
\label{sec:Derivation}
In this section, we address the minimization problem of energy $E_{\tau,\varepsilon}$ using linear sequential iterations and establish the monotonic energy decay property associated with this iteration scheme. To minimize a concave functional \( E_{\tau,\epsilon}(\mathbf{v}) \) over the constrained set, we propose an iterative method based on the supporting property of concave functionals. At each iteration \( k \), the functional  is approximated by its first-order linear expansion (supporting hyperplane) at the current iterate \( \mathbf{u}_k \):
\begin{equation*}
    l_{\mathbf{u}_k}(\mathbf{v}) = E_{\tau,\epsilon}(\mathbf{u}_k) 
    + \left\langle \frac{\delta E_{\tau,\epsilon}}{\delta \mathbf{u}}(\mathbf{u}_k), \mathbf{v} - \mathbf{u}_k \right\rangle,
\end{equation*}
Subsequently, the computation of \( \mathbf{u}_{k+1} \) is reformulated as a   minimal problem for the energy functional \( l_k(\mathbf{u}) \). Since \( \mathbf{u}_{k+1} \) must also lie within the constraint set, we obtain the following formulation:
\begin{equation*}
\mathbf{u}_{k+1} 
= \arg\min_{\mathbf{u} \in \mathcal{C} } l_{\mathbf{u}_k}(\mathbf{u}) 
= \arg\min_{\mathbf{u} \in \mathcal{C}} 
- \int_{\Omega_2} \left\langle \mathbf{u},\frac{1}{\tau} e^{\tau \Delta} \mathbf{u}_k 
+ \frac{1}{\varepsilon^3} B(\phi) \mathbf{g} \right\rangle \,\mathrm{d}x.
\end{equation*}
Since the \( \ell_2 \) norm of the vector function \( \mathbf{u} = (u_1, u_2) \) is always equal to 1, and by the properties of the inner product, the minimizer of the functional \( l_k \) with the unit norm constraint is given by the projection of \( e^{\tau \Delta} \mathbf{u}_k + \frac{2}{\epsilon^3} B(\phi) \mathbf{g} \) onto \( \mathcal{C} \), which was given by \eqref{eq:iteration}. 
\begin{equation}
\label{eq:iteration}
\mathbf{u}_{k+1} = \frac{
e^{\tau \Delta} \mathbf{u}_k + \dfrac{\tau}{\varepsilon^3} B(\phi) \mathbf{g}
}{
\bigl| e^{\tau \Delta} \mathbf{u}_k + \dfrac{\tau}{\varepsilon^3} B(\phi) \mathbf{g} \bigr|
}
\end{equation}
By iterating the following process to steady state, the final vector field can be obtained. The complete iterative process is described in \cref{alg:iterative_projection}.
\begin{algorithm}
\caption{Diffusion and normalization algorithm}
\label{alg:iterative_projection}
\begin{algorithmic}[1]
\REQUIRE Initial field $\mathbf{u}_0$, function $g$, function $\phi$, parameters $\tau$, $\epsilon$, tolerance $\text{tol}$
\ENSURE Final solution $\mathbf{u}$

\STATE Set $k := 0$
\STATE Set $\delta := +\infty$
\STATE Set $\mathbf{u} := \mathbf{u}_0$
\WHILE{$\delta > \text{tol}$}
    \STATE Compute $\mathbf{v} := e^{\tau \Delta} \mathbf{u}_k + \dfrac{2}{\epsilon^3} B(\phi)\, \mathbf{g}$
    \STATE Set $\mathbf{w} := \dfrac{\mathbf{v}}{|\mathbf{v}|}$
    \STATE Compute $\delta := \|\mathbf{u} - \mathbf{w}\|$
    \STATE Set $k := k + 1$
    \STATE Set $\mathbf{u} := \mathbf{w}$
\ENDWHILE
\RETURN {$\mathbf{u}$}
\end{algorithmic}
\end{algorithm}
Subsequently, we further ensure that, for any combination of parameters $\varepsilon$ and $\tau$, the iterative sequence satisfies the energy-decreasing property and converges in a finite number of steps.
\begin{theorem}[Energy Diminishing Property]
\label{thm:energy_decay}
Let $\Omega_2, \mathbf{g}$, and $\phi$ be defined as in lemma \ref{lemma:energy}. 
For any $\mathbf{u} \in L^2(\Omega_2; \mathbb{R}^2)$ satisfying $|\mathbf{u}| = 1$ a.e., let $\mathbf{v}$ be the update defined by the iterative scheme \eqref{eq:iteration}:
\[
\mathbf{v} = \frac{e^{\tau \Delta} \mathbf{u} + \frac{\tau}{\varepsilon^3} B(\phi) \mathbf{g}}{\lvert e^{\tau \Delta} \mathbf{u} + \frac{\tau}{\varepsilon^3} B(\phi) \mathbf{g} \rvert}.
\]
Assuming the denominator is non-zero a.e., the energy functional $E_{\tau,\epsilon}$ satisfies the monotonicity property
\[
E_{\tau,\epsilon}(\mathbf{v}) \leq E_{\tau,\epsilon}(\mathbf{u}).
\]
\end{theorem}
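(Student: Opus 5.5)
The plan is to combine the concavity of $E_{\tau,\varepsilon}$ (Lemma~\ref{lemma:energy}, items~\ref{item:concavity} and~\ref{item:derivative}) with the observation that $\mathbf{v}$ minimizes the linearization of $E_{\tau,\varepsilon}$ at $\mathbf{u}$ pointwise over unit vectors. This is the standard ``sequential linear programming'' argument for diffusion generated methods.

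\textbf{Step 1: exact supporting-hyperplane identity.} Write the energy as $E_{\tau,\varepsilon}(\mathbf{w}) = -\frac{1}{2\tau}\int_{\Omega_2}\langle \mathbf{w}, e^{\tau\Delta}\mathbf{w}\rangle\,\mathrm{d}x - \frac{1}{\varepsilon^3}\int_{\Omega_2} B(\phi)\langle \mathbf{w},\mathbf{g}\rangle\,\mathrm{d}x$. This uses the symmetry \eqref{eq:symmetry} and the semigroup property $e^{\frac{\tau}{2}\Delta}e^{\frac{\tau}{2}\Delta}=e^{\tau\Delta}$. Rather than only quoting concavity, I would verify directly, by expanding the quadratic form and using symmetry again, that
\[
E_{\tau,\varepsilon}(\mathbf{v}) - E_{\tau,\varepsilon}(\mathbf{u}) - \int_{\Omega_2}\Bigl\langle \frac{\delta E_{\tau,\varepsilon}(\mathbf{u})}{\delta\mathbf{u}}, \mathbf{v}-\mathbf{u}\Bigr\rangle\mathrm{d}x = -\frac{1}{2\tau}\bigl\|e^{\frac{\tau}{2}\Delta}(\mathbf{v}-\mathbf{u})\bigr\|_{L^2}^2 \le 0 .
\]
Here the derivative is the one from Lemma~\ref{lemma:energy}, item~\ref{item:derivative}: $\frac{\delta E_{\tau,\varepsilon}(\mathbf{u})}{\delta\mathbf{u}} = -\frac{1}{\tau}\mathbf{a}$, where $\mathbf{a} := e^{\tau\Delta}\mathbf{u} + \frac{\tau}{\varepsilon^3}B(\phi)\mathbf{g}$.

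\textbf{Step 2: the linear term is nonpositive.} By construction $\mathbf{v} = \mathbf{a}/|\mathbf{a}|$ a.e., which is well defined by the non-vanishing assumption. Pointwise, $\langle \mathbf{a},\mathbf{v}\rangle = |\mathbf{a}|$. Since $|\mathbf{u}|=1$ a.e., Cauchy--Schwarz gives $\langle \mathbf{a},\mathbf{u}\rangle \le |\mathbf{a}|$. Hence
\[
\int_{\Omega_2}\Bigl\langle \frac{\delta E_{\tau,\varepsilon}(\mathbf{u})}{\delta\mathbf{u}}, \mathbf{v}-\mathbf{u}\Bigr\rangle\mathrm{d}x = -\frac{1}{\tau}\int_{\Omega_2}\bigl(|\mathbf{a}| - \langle\mathbf{a},\mathbf{u}\rangle\bigr)\mathrm{d}x \le 0 .
\]
Combining Steps~1 and~2 yields $E_{\tau,\varepsilon}(\mathbf{v}) \le E_{\tau,\varepsilon}(\mathbf{u})$. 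As a byproduct we get the quantitative decay
\[
E_{\tau,\varepsilon}(\mathbf{u}) - E_{\tau,\varepsilon}(\mathbf{v}) \ge \frac{1}{2\tau}\bigl\|e^{\frac{\tau}{2}\Delta}(\mathbf{v}-\mathbf{u})\bigr\|^2 + \frac{1}{\tau}\int_{\Omega_2}\bigl(|\mathbf{a}|-\langle \mathbf{a},\mathbf{u}\rangle\bigr)\,\mathrm{d}x ,
\]
which together with the lower bound of item~\ref{item:boundedness} could be used for the convergence remark.

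\textbf{Main obstacle.} The delicate point is bookkeeping of constants. The scaling of the $\mathbf{g}$-term in the energy, in its derivative, and in the update \eqref{eq:iteration} must match, namely the factor $\tau/\varepsilon^3$ inside $\mathbf{a}$. Only then is $\mathbf{v}$ exactly the normalized negative gradient, so that Step~2 is a genuine pointwise minimization. I would fix the normalization of $E_{\tau,\varepsilon}$ as written above so that it agrees with Lemma~\ref{lemma:energy}, item~\ref{item:derivative}, and then check the identity in Step~1. That identity is where symmetry and the semigroup property of $e^{\tau\Delta}$ on the periodic domain $\Omega_2$ are used. Measurability and integrability of all terms follow from $\mathbf{u},\mathbf{v}\in L^\infty$, $B(\phi)\in L^\infty$, $\mathbf{g}\in L^\infty$, and the $L^2$-contraction \eqref{eq:contraction}.
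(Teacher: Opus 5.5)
Your proposal is correct and follows the paper's argument: the paper proves $E_{\tau,\epsilon}(\mathbf{v}) \le l_{\mathbf{u}}(\mathbf{v}) \le l_{\mathbf{u}}(\mathbf{u}) = E_{\tau,\epsilon}(\mathbf{u})$, using concavity for the first inequality and, for the second, the fact that $\mathbf{v}$ minimizes the linearization $l_{\mathbf{u}}$ over unit-norm fields. Your Steps~1 and~2 make those two inequalities explicit, via the exact concavity gap $-\frac{1}{2\tau}\|e^{\frac{\tau}{2}\Delta}(\mathbf{v}-\mathbf{u})\|_{L^2}^2$ and a pointwise Cauchy--Schwarz bound; your attention to the $\tau/\varepsilon^3$ normalization is warranted given the inconsistent constants elsewhere in the paper, and your quantitative decay estimate in fact holds with equality.
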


\begin{proof}
    Since $\mathbf{u} \in \mathcal{C}$ and the linear functional $l_{\mathbf{u}}$ is defined on the Hilbert space $L^2(\Omega_2;\mathbb{R}^2)$, it follows that for any $\mathbf{v} \in L^2(\Omega_2;\mathbb{R}^2)$, the inequality
\begin{equation*}
l_{\mathbf{u}}(\mathbf{v}) \geq E_{\tau,\epsilon}(\mathbf{v})
\end{equation*}
holds. Consequently, we obtain the following chain of inequalities:
\begin{equation*}
E_{\tau,\epsilon}(\mathbf{v}) \leq l_{\mathbf{u}}(\mathbf{v}) \leq l_{\mathbf{u}}(\mathbf{u}) = E_{\tau,\epsilon}(\mathbf{u}),
\end{equation*}
which establishes an upper bound of $E_{\tau,\epsilon}(\mathbf{v})$ in terms of the functional $l_{\mathbf{u}}$ evaluated at $\mathbf{u}$.
\end{proof}
The equality holds only when $\mathbf{v} = \mathbf{u}$ . Since the energy functional has a low bound, the algorithm converges to a stationary solution in finite steps. 

\section{Implementation}
\label{sec:Implementation}

This section describes the practical implementation of the proposed framework. The emphasis is on the efficient computation of cross fields via the exponential of the Laplacian formulation, which constitutes the core part of the algorithmic implementation. To ensure reproducibility and facilitate practical adoption, we also summarize the subsequent stages of the pipeline, including separatrix extraction guided by the computed cross field, domain decomposition, and the construction of structured quadrilateral meshes. The presentation is intended to connect the theoretical development with its numerical realization in a clear and systematic manner.

\subsection{Computation of Cross-Fields}

We describe the numerical realization of the three ingredients in \cref{alg:iterative_projection}: (i) the phase-field $\phi$, (ii) an extension $\mathbf{g}$ of the boundary data, and (iii) the diffusion operator $e^{\tau\Delta}$ acting on vector fields. The functions $\phi$ and $\mathbf{g}$ encode the geometry and boundary alignment information, while $e^{\tau\Delta}$ provides the linear smoothing step in each iteration. Throughout, the computational domain $\Omega_2$ is taken to be a square equipped with periodic boundary conditions so that convolutions and heat diffusion can be implemented spectrally.

\paragraph{Phase-field construction}
As in Section~\ref{sec:CDDM}, we define $\phi$ as a mollification of the indicator function $\chi_{\Omega_1}$ by a Gaussian kernel,
\[
\phi = G_\varepsilon * \chi_{\Omega_1},
\qquad
G_\varepsilon(x) = \frac{1}{(2\pi\varepsilon^2)^{d/2}}
\exp\!\left(-\frac{|x|^2}{2\varepsilon^2}\right),
\]
where $\varepsilon>0$ controls the width of the diffuse interface. This convolution-based construction yields a smooth approximation of $\chi_{\Omega_1}$ on $\Omega_2$ and is naturally compatible with FFT-based evaluation.

\paragraph{FFT evaluation of the diffusion step}
Let $u:\Omega_2\to\mathbb{R}^2$ be a vector field. On the periodic square $\Omega_2$, the heat semigroup $e^{\tau\Delta}$ is diagonalized in the Fourier basis. Denoting by $\xi$ the Fourier frequency, the diffusion step admits the representation
\[
e^{\tau\Delta}u
=
\mathrm{IFFT}\!\left(
e^{-\tau|\xi|^2}\,\mathrm{FFT}(u)
\right),
\]
applied component-wise to $u$. Although the theoretical model in~\eqref{eq:inte} is posed with boundary conditions associated with $\Omega_1$, the spectral realization on $\Omega_2$ introduces only a weak influence from $\partial\Omega_2$ provided that $\Omega_2$ is chosen sufficiently large. In the subsequent experiments, any discrepancy remains localized near $\partial\Omega_2$ and does not affect the computed field within $\Omega_1$.

\paragraph{Boundary-data extension}
A remaining practical issue is the construction of an extension field $\mathbf{g}$ entering the diffuse-domain penalty term. Let $\Gamma=\partial\Omega_1$ and define a tubular neighborhood
\[
\Omega_\delta
=
\{x\in\mathbb{R}^2:\operatorname{dist}(x,\Gamma)<\delta\}.
\]
A standard diffuse-domain prescription extends boundary data along normal directions, which may be expressed in terms of the closest-point projection
\[
\mathrm{cp}_\Gamma(x)
=
\arg\min_{y\in\Gamma}\|x-y\|_2,
\qquad
g^e(x)=g\!\left(\mathrm{cp}_\Gamma(x)\right),
\quad x\in\Omega_\delta.
\]
While this construction is natural when $\Gamma$ is smooth, it can lose regularity near $C^0$ corners, where $\mathrm{cp}_\Gamma$ may be set-valued and discontinuities can be induced in $g^e$. Since cross-field computation requires a sufficiently regular driving field, we replace $g^e$ by a smoothed approximation obtained via mollification.


To compute $g^e(x)$, we approximate $\Gamma$ by a polygonal curve $\Gamma_h$ obtained from an ordered set of sample points $\{\mathbf{p}_i\}_{i=1}^N$. For each $x\in\Omega_\delta$, let $\mathrm{cp}_{\Gamma_h}(x)$ denote the closest-point projection of $x$ onto $\Gamma_h$ (computed by closest-segment projection). We define an initial extension by evaluating the boundary data at the projected point, with linear interpolation along the containing segment:
\[
g^e_h(x) := g\bigl(\mathrm{cp}_{\Gamma_h}(x)\bigr).
\]
In general, $g^e_h$ may have limited regularity, particularly near $C^0$ corners where the closest-point map can be non-smooth. To obtain a sufficiently regular driving field for the diffusion--normalization iteration, we apply a Gaussian mollification and set
\[
 \tilde g^e_h(x)
:=
(G_\sigma * g^e_h)(x)
=
\int_{\Omega_\delta} G_\sigma(x-y)\,g^e_h(y)\,\mathrm{d}y,
\]
where $\sigma>0$ is a smoothing scale chosen commensurate with the diffuse-interface width. In implementation, $g^e_h$ is extended by zero outside $\Omega_\delta$ to the periodic box $\Omega_2$, and the convolution is evaluated by FFT. This construction yields an extension field with improved interior regularity while preserving the boundary-alignment information required by the diffuse-domain penalty term. The complete procedure for boundary-data extension is summarized in~\cref{alg:vectorfield}.

\begin{algorithm}
\caption{Vector field extension via closest segment interpolation}
\label{alg:vectorfield}
\begin{algorithmic}[1]
\REQUIRE Discrete closed curve $\mathcal{C} = \{P_1, \dotsc, P_N\} \subset \mathbb{R}^2$
\REQUIRE Associated vectors $\{\vec{v}_1, \dotsc, \vec{v}_N\} \subset \mathbb{R}^2$
\REQUIRE Tubular neighborhood $\Omega$ around the curve
\ENSURE A vector field $\tilde g^e_h(p)$ defined for all $p \in \Omega$

\STATE Append $P_{N+1} := P_1$, $\vec{v}_{N+1} := \vec{v}_1$ to close the curve

\FOR{each point $p \in \Omega$}
  \STATE Find closest projection $q$ of $p$ onto the polyline $\mathcal{C}$ via Closest Segment Projection
  \STATE Find interpolation weight $\alpha \in [0,1]$ such that $q = (1 - \alpha) P_i + \alpha P_{i+1}$
  \STATE Set $g^e_h(x) \gets (1 - \alpha)\vec{v}_i + \alpha \vec{v}_{i+1}$
\ENDFOR

\STATE \quad Let $ \tilde g^e_h(p)
:=
(G_\sigma * g^e_h)(p)$

\RETURN $\tilde g^e_h(p)$
\end{algorithmic}
\end{algorithm}

The above constructions provide all inputs for the iterative diffusion--normalization scheme, and yield an implementation that is fully compatible with structured grids and FFT-based acceleration.

\subsection{Tracing Streamlines}\label{sec:Tracing}

To construct a quadrilateral layout from a computed cross field, one must (i) localize the field singularities and (ii) initialize the separatrices issuing from these defects. In the Ginzburg--Landau regime, the Dirichlet energy density is strongly concentrated near singularities, which motivates an energy-based detection criterion. We therefore evaluate a discrete Dirichlet energy $E(i,j)$ on each cell (via linear interpolation of the field over the cell) and designate $(i,j)$ as a candidate singular cell if $E(i,j)$ attains a strict local maximum on the associated $3\times3$ neighborhood,
\[
E(i,j) > E(i+m,j+n)
\qquad
\forall (m,n)\in\{-1,0,1\}^2\setminus\{(0,0)\}.
\]

Given a candidate cell, we next localize the singularity as the zero of the interpolated representation field. Let $\mathbf{s}_0$ denote the singular point within the cell, obtained by solving for the vanishing of the bilinearly interpolated complex-valued representation. To initialize separatrix tracing, we examine the field along the cell boundary. Denote the cell vertices by $\{\mathbf{s}_1,\mathbf{s}_2,\mathbf{s}_3,\mathbf{s}_4\}$ and consider an edge $E=[\mathbf{s}_1,\mathbf{s}_2]$. For a point $P\in E$, evaluation of the cross field yields four symmetric direction vectors $\{\mathbf{v}_k(P)\}_{k=1}^4$. An edge point $P$ is identified as an intersection with a separatrix if one branch aligns with the radial direction from the singularity, namely,
\[
\exists\, k\in\{1,2,3,4\}
\quad \text{such that} \quad
\frac{\mathbf{v}_k(P)}{\|\mathbf{v}_k(P)\|}
=
\frac{P-\mathbf{s}_0}{\|P-\mathbf{s}_0\|}.
\]
The same test is applied to the remaining edges of the cell. The resulting intersection points provide the initial conditions for separatrix integration, specifying both the starting locations and the associated tangent directions. As shown in~\cref{fig:initial_condition}, the detected configurations correspond to singularities of index $+\tfrac{1}{4}$ and $-\tfrac{1}{4}$, which in turn induce irregular mesh vertices of valence $3$ and $5$, consistent with the topological characterization in Section~\ref{sec:crossfield}.

\begin{figure}[t!]
    \centering

    \begin{subfigure}[t]{0.48\textwidth}
        \centering
        \includegraphics[width=\linewidth]{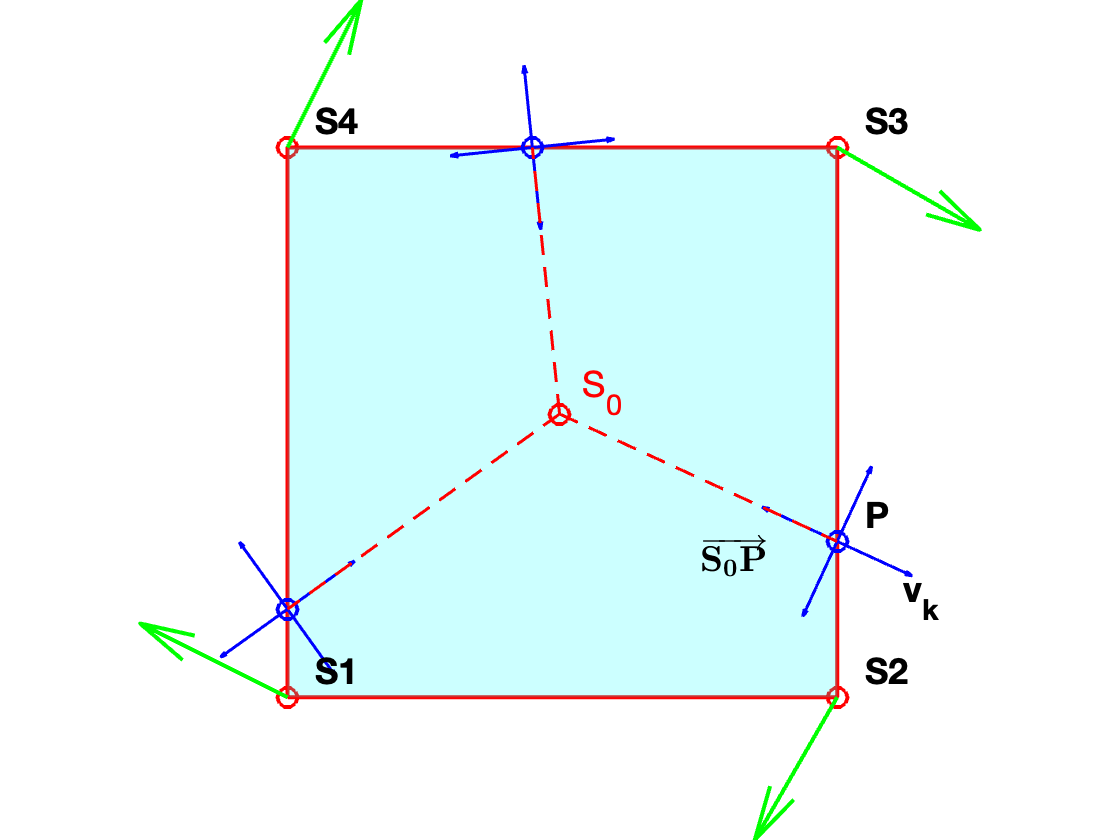}
        \label{fig:stream_1}
    \end{subfigure}
    \begin{subfigure}[t]{0.42\textwidth}
        \centering
        \includegraphics[width=\linewidth]{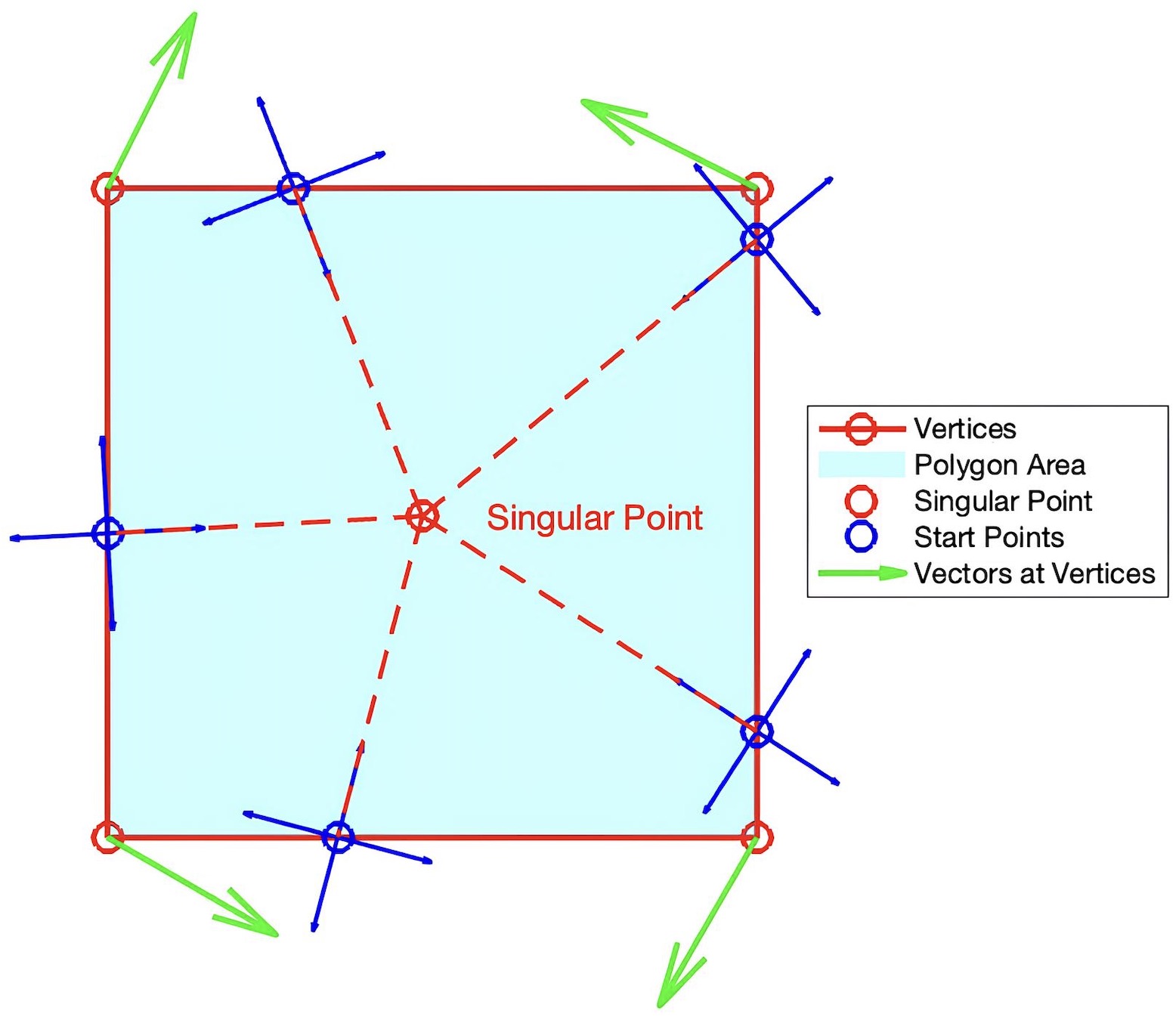}
        \label{fig:stream_2}
    \end{subfigure}

    \caption{The process of finding initial conditions for streamline propagation in triangular and quadrilateral cells. See Section~\ref{sec:Tracing}.}
    \label{fig:initial_condition}
\end{figure}

\begin{figure}[t!]
  \centering
  \label{fig:propogation}\includegraphics[scale=0.15]{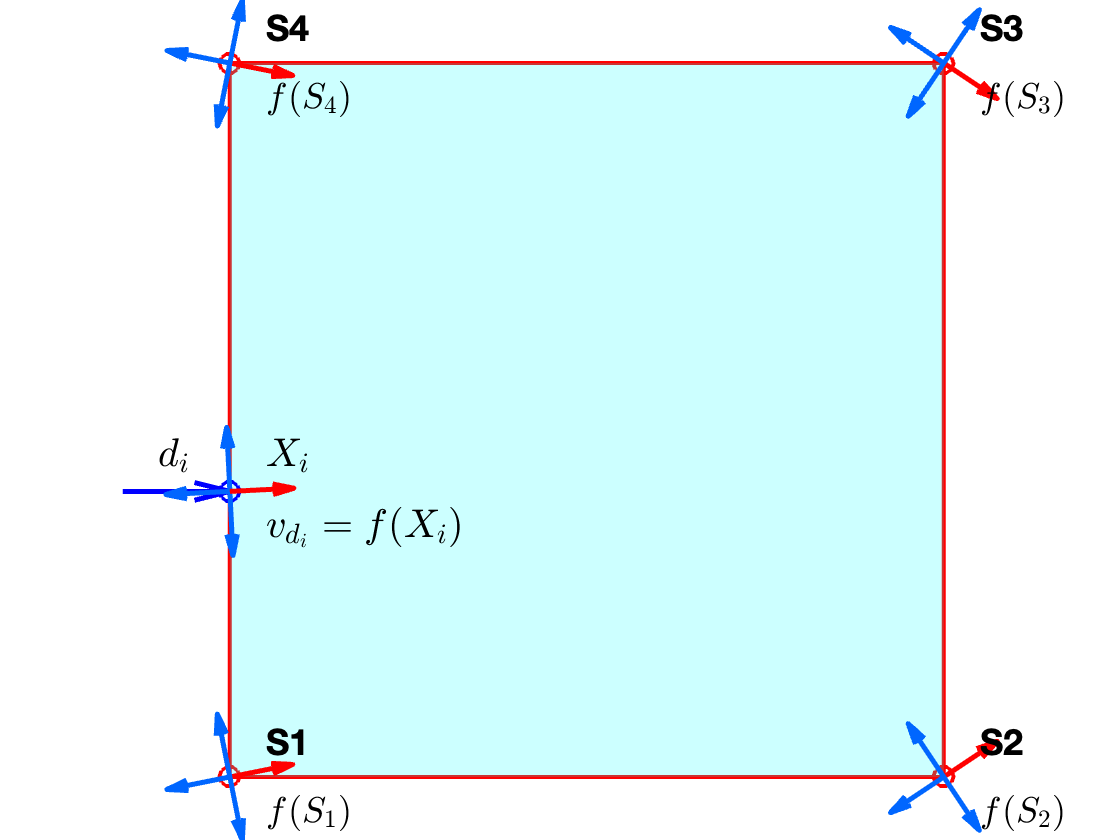}
  \caption{Streamline propagation within a structured quadrilateral cell. See Section~\ref{sec:Tracing}.}
  \label{fig:Propagation}
\end{figure}

Unlike a single-valued vector field, a cross field is intrinsically multi-valued: at each point it specifies four directions related by $\pi/2$ rotations. To obtain a well-defined streamline, branch consistency is enforced through the selection rule~\eqref{eq:streamline condition}, which promotes directional continuity along the trajectory. As illustrated in~\Cref{fig:Propagation}, let $X_i$ denote the current position and let $d_i$ be the propagation direction at step $i$. The cross field value $f(X_i)$ is evaluated by interpolation of the vertex data, yielding four candidate directions. The direction for the next step is chosen as the branch that minimizes the angular deviation from $d_i$, and the trajectory is advanced to $X_{i+1}$ using a standard time-stepping scheme (e.g., explicit Euler or Runge--Kutta). Integration is terminated when the streamline exits the domain or enters a prescribed neighborhood of a singularity. For alternative streamline-tracing strategies with enhanced robustness, see~\cite{campen2015quantized,RobustStreamline1}.

\subsection{Block mesh generation}

After the separatrices partition the domain into quadrilateral-like blocks, a structured quadrilateral mesh is generated on each block by a harmonic mapping approach~\cite{li2001moving,Remacle2010}. The basic idea is to construct a smooth bijective map from a logical square to the physical block by solving Laplace equations; the resulting coordinate functions are discrete harmonic and typically distribute grid lines smoothly, thereby reducing element distortion.

Let $D=[0,1]^2$ be the logical domain with coordinates $\mathbf{u}=(u,v)$, and let $\Omega$ denote a physical block. The mapping $x:D\to\Omega$ is written as $x(\mathbf{u})=(x_1(u,v),x_2(u,v))$, where $x_1$ and $x_2$ solve the decoupled boundary value problems
\[
\Delta x_1(\mathbf{u})=0,\qquad \Delta x_2(\mathbf{u})=0,\qquad \mathbf{u}\in D,
\]
with Dirichlet data prescribed by the geometry of $\partial\Omega$. Denote the four boundary curves of $\Omega$ by $\Gamma_k$ and the corresponding edges of $\partial D$ by $\gamma_k$ for $k=1,\dots,4$. A boundary correspondence is imposed by
\[
x\big|_{\gamma_k}=\mathbf{g}_k,\qquad \mathbf{g}_k:[0,1]\to \Gamma_k,
\]
where $\mathbf{g}_k$ is chosen to parameterize $\Gamma_k$ (e.g., by arc length) and to match consistently across adjacent blocks. Once $x$ is computed, the image of a uniform tensor-product grid on $D$ provides a structured quadrilateral mesh on $\Omega$.

In implementation, each edge is sampled by $N$ points to define the discrete boundary values. The Laplace equations are then discretized on $D$ using standard second-order finite differences, yielding sparse linear systems for $\{x_{ij}\}$ and $\{y_{ij}\}$. The resulting nodal coordinates $\{(x_{ij},y_{ij})\}$ constitute the vertices of the quadrilateral mesh on the physical block.

\section{Numerical experiments}
\label{sec:Numerical}

This section reports numerical experiments designed to assess the proposed method with respect to (i) energy stability of the iteration, (ii) sensitivity to the parameters $\varepsilon$ and $\tau$ and the resulting singularity configurations, (iii) robustness under geometric irregularities (high curvature and sharp corners), and (iv) performance on complex domains relevant to practical quadrilateral meshing. We first verify the monotone decay of the discrete energy predicted by the analysis. We then quantify how parameter scaling affects singularity locations and the induced block decomposition, and compare the limiting behavior with established sharp-interface solvers. Finally, we demonstrate robustness and mesh quality on domains with nonsmooth boundaries and on representative nontrivial geometries.

\subsection{Unconditional energy stability of the iterative scheme}
\label{sec:UnconditionalDecrease}

As shown in Section~\ref{sec:Derivation}, \cref{alg:iterative_projection} is unconditionally energy stable for any $\epsilon>0$ and $\tau>0$. Here, $\tau$ is the diffusion time step in the exponential of the Laplacian $e^{\tau\Delta}$, controlling the smoothing strength, while $\epsilon$ specifies the diffuse-interface width and scales the boundary-penalty term in the convolution-based diffuse domain method.

To corroborate the theory, we consider the unit disk
\[
\Omega_1=\{x\in\mathbb{R}^2:\ |x|\le 1\},
\]
embedded in the periodic computational domain $\Omega_2=[-\pi,\pi]^2$. The monotone decay result applies to the modified energy $E_{\tau,\epsilon}$ posed on $\Omega_2$; in addition, we report the Dirichlet energy restricted to $\Omega_1$ as a diagnostic for the physical field quality in the target domain. The iteration is carried out on $\Omega_2$ with parameters scaled by the grid size $h$ via
\[
\epsilon=\tau=\alpha h,
\]
and we test $\alpha\in\{1,\,1/4,\,1/16,\,1/32,\,1/64\}$.

\begin{figure}[t!]
  \centering
  \begin{subfigure}[b]{0.46\textwidth}
    \label{fig:logDeng}
    \centering
    \includegraphics[width=\linewidth]{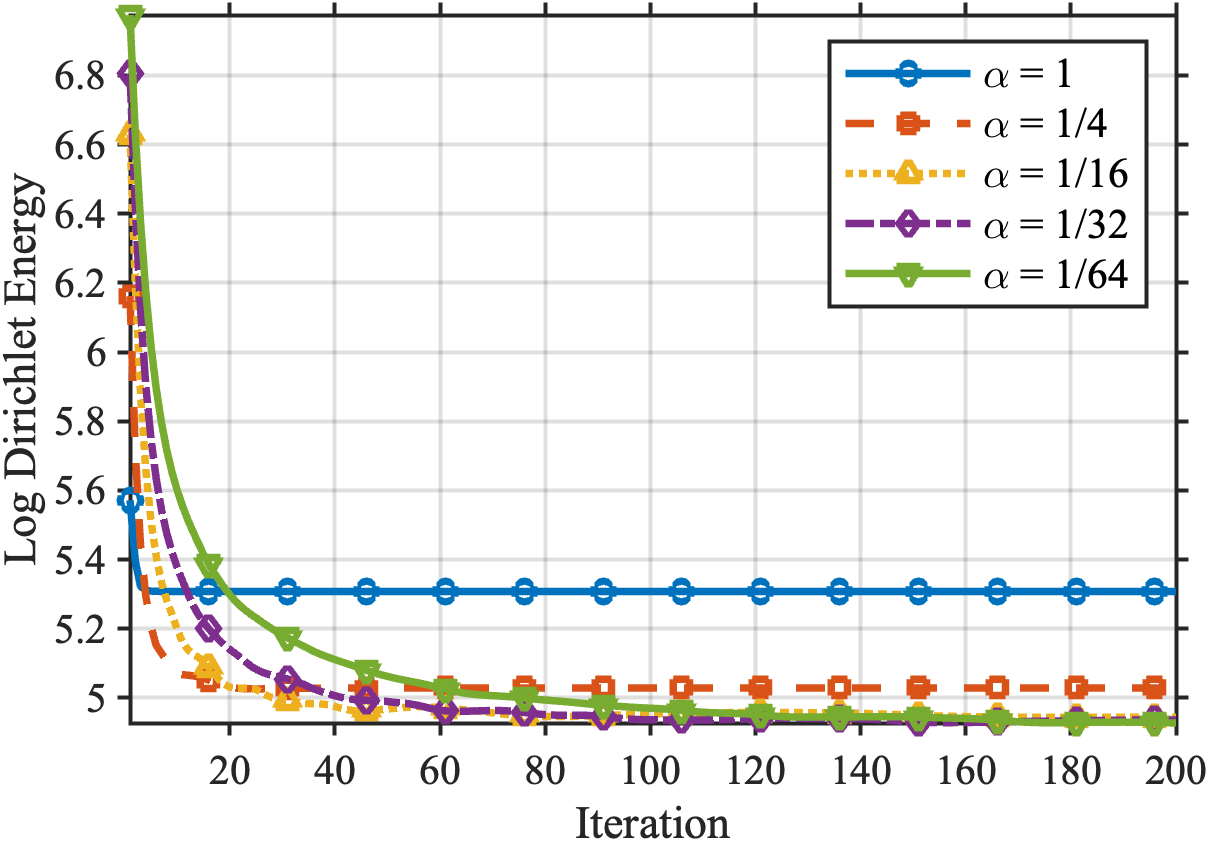} 
    \caption*{(a)} 
  \end{subfigure}
  \hfill
  \begin{subfigure}[b]{0.45\textwidth}
    \label{fig:logAeng}
    \centering
    \includegraphics[width=\linewidth]{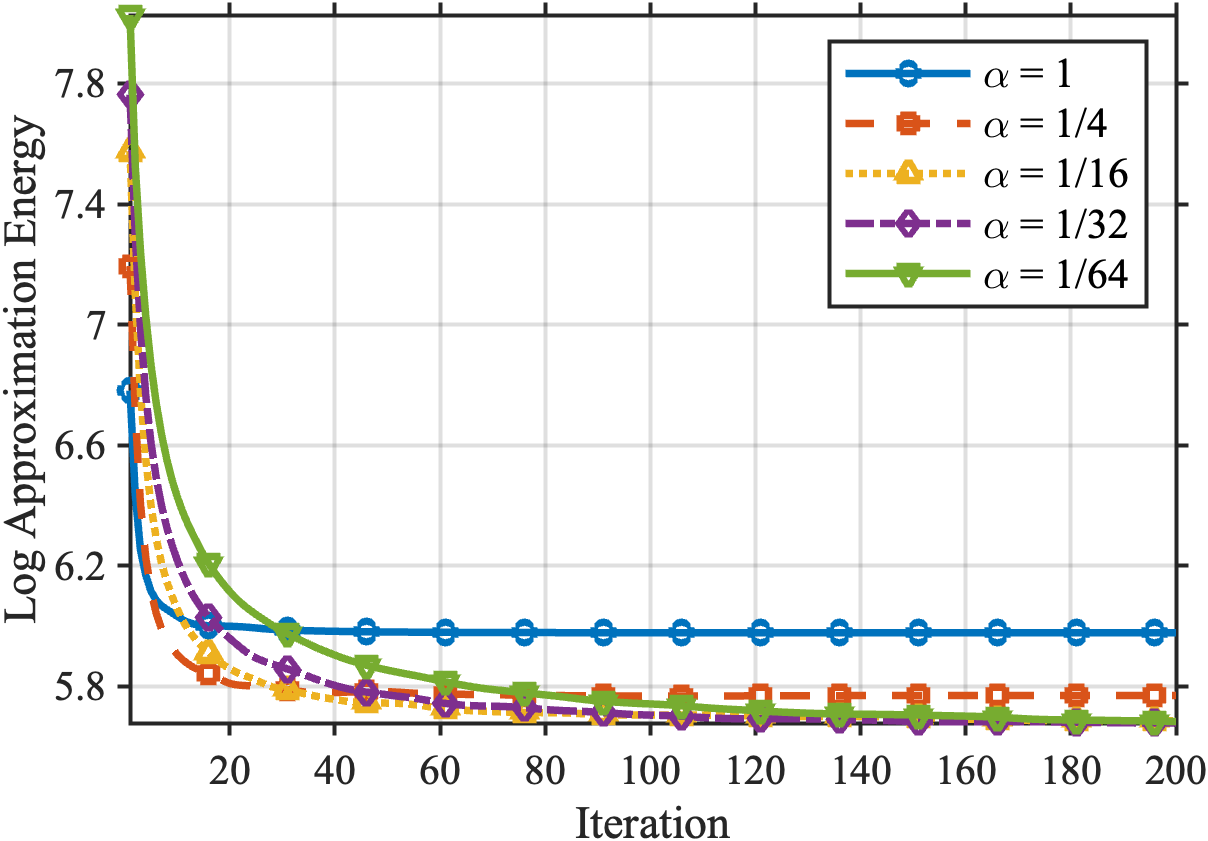} 
    \caption*{(b)}
  \end{subfigure}
  \caption{
(a) Dirichlet energy distribution within the subdomain $\Omega_1$ for various values of $\alpha = 1$, $1/4$, $1/16$, $1/32$, and $1/64$, under the condition $\varepsilon = \tau = \alpha h$.  
(b) Decay of the approximation energy over the background domain $\Omega_2$ for the same set of parameters. See Section~\ref{sec:UnconditionalDecrease}.
} 
  \label{fig:comparison}
\end{figure}
As shown in~\cref{fig:comparison}, the modified energy $E_{\epsilon,\tau}$ decays monotonically for all tested parameter choices, in agreement with the theoretical guarantee. In addition, the Dirichlet energy restricted to $\Omega_1$ decreases steadily throughout the iteration, although no stability result is claimed for this quantity. Taken together, these observations indicate that the method is not only globally energy diminishing on $\Omega_2$, but also yields a progressively smoother and more consistent field within the physical domain $\Omega_1$ in the tested regime.

\subsection{Influence of parameters and energy on singularities and decomposition} \label{sec:influence}

Section~\ref{sec:UnconditionalDecrease} establishes that the iteration is energy diminishing for any $\epsilon>0$ and $\tau>0$. This property, however, does not imply convergence to a unique steady state. In practice, different parameter choices can lead to different limiting fields and different final energy levels. This behavior is consistent with a nonconvex energy landscape, in which multiple locally stable configurations exist; each configuration corresponds to a distinct arrangement of singularities and induces a different separatrix graph and block decomposition.

In this subsection, the sensitivity of the computed singular structure and the resulting decomposition to $(\epsilon,\tau)$ is examined numerically. We also consider the regime $\epsilon,\tau\to 0$ and show that the solution approaches the solution of the original formulation~\eqref{eq:originproblem}, providing evidence for asymptotic consistency.

\begin{figure}[t!]
  \centering

  \subfloat[$\varepsilon =\tau= 0.5h$]{%
    \includegraphics[width=0.2\linewidth]{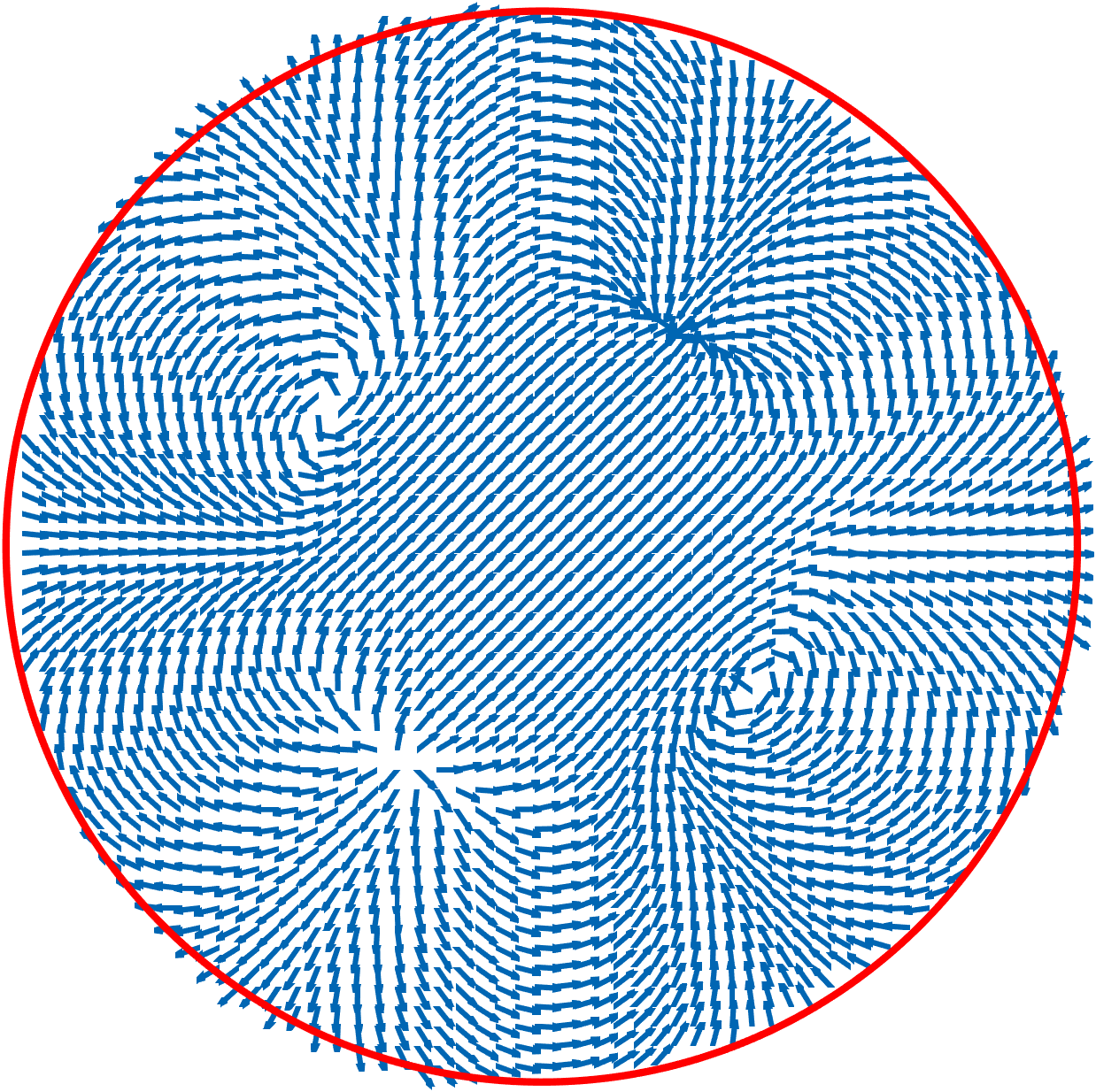}
  }
  \hspace{0.02\linewidth}
  \subfloat[$\varepsilon =\tau = 0.25h$]{%
    \includegraphics[width=0.2\linewidth]{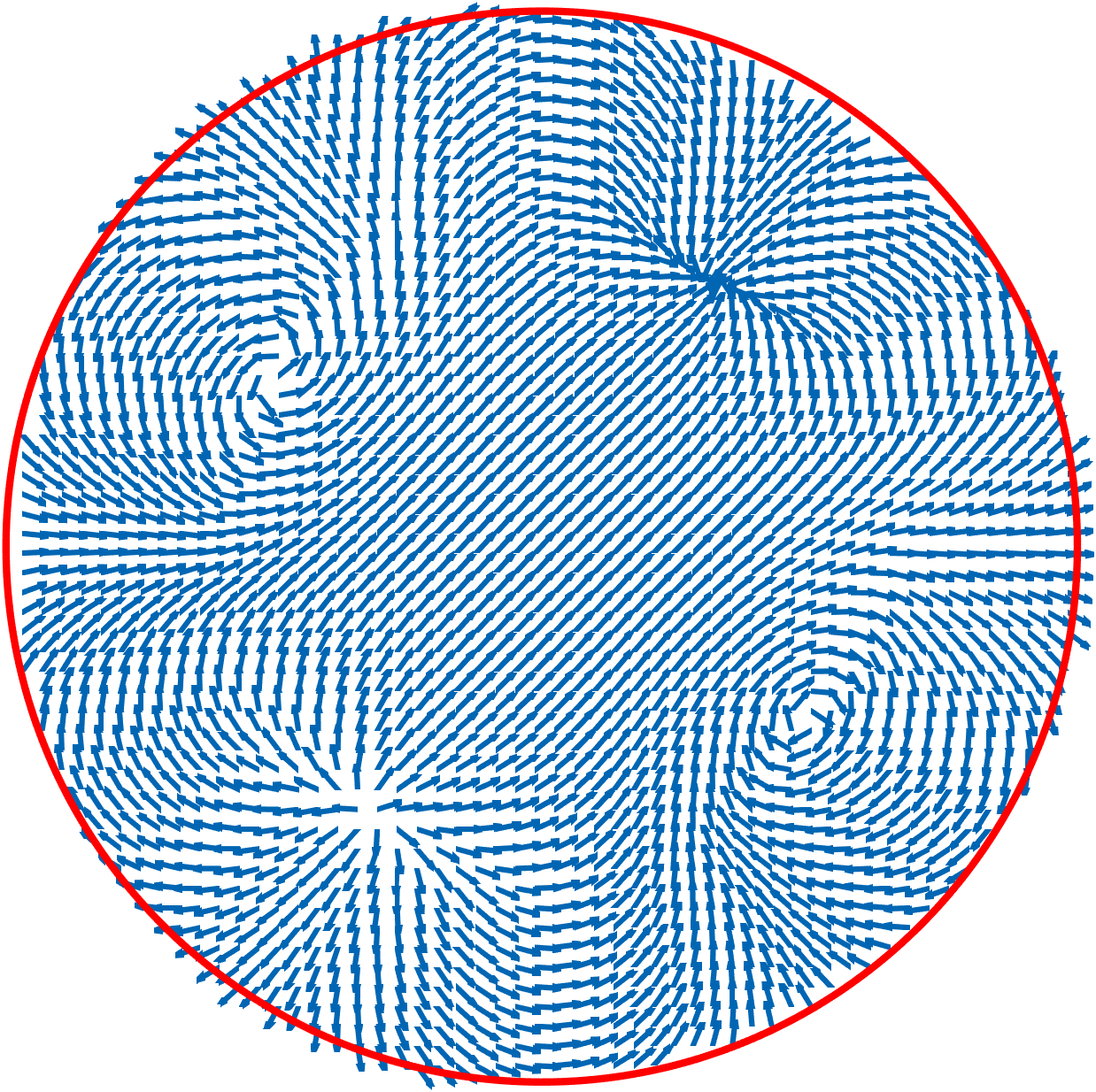}
  }
  \hspace{0.02\linewidth}
  \subfloat[$\varepsilon = \tau = 0.125h$]{%
    \includegraphics[width=0.2\linewidth]{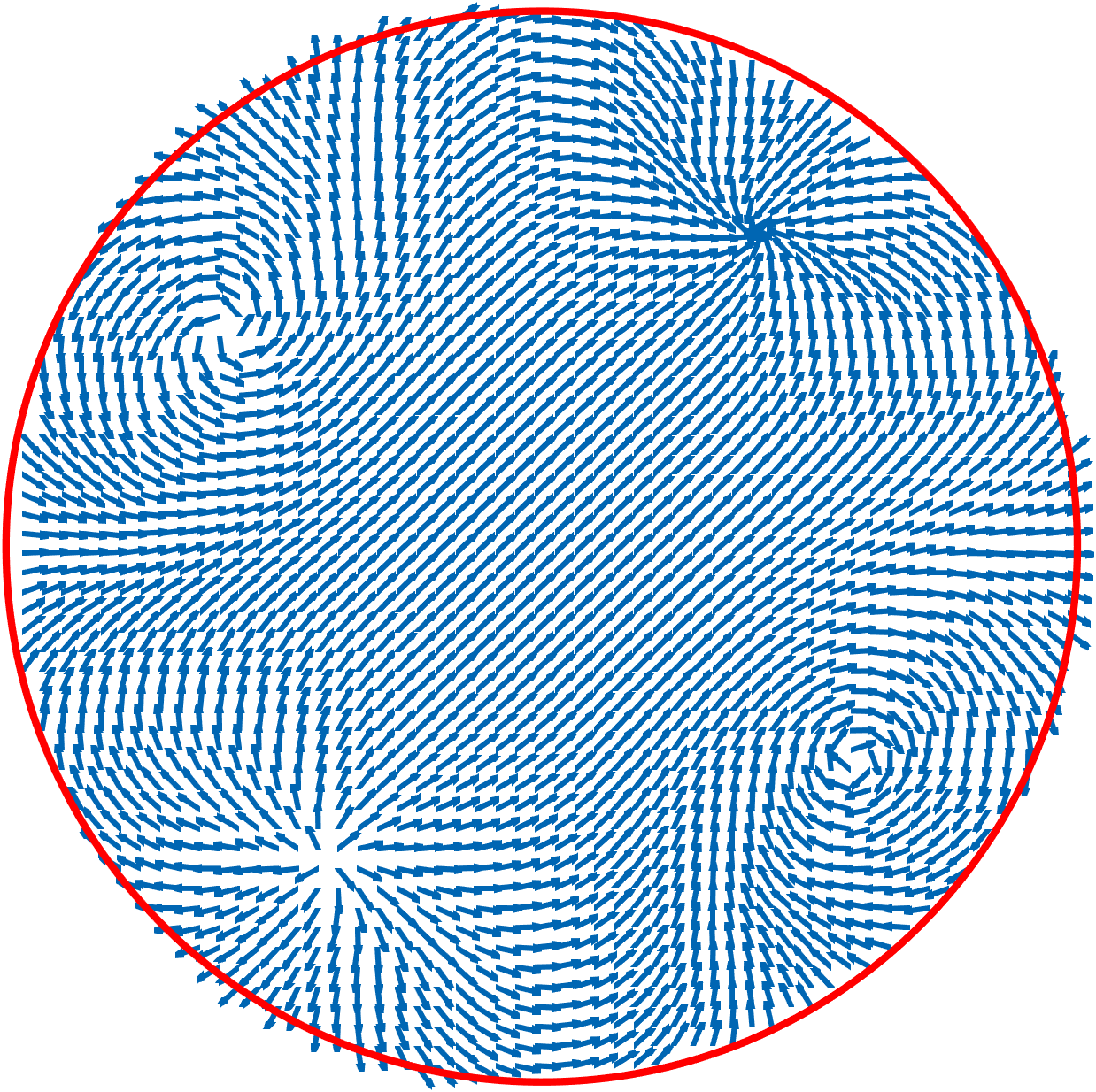}
  }
  \hspace{0.02\linewidth}
  \subfloat[$\varepsilon = \tau=0.0625h$]{%
    \includegraphics[width=0.2\linewidth]{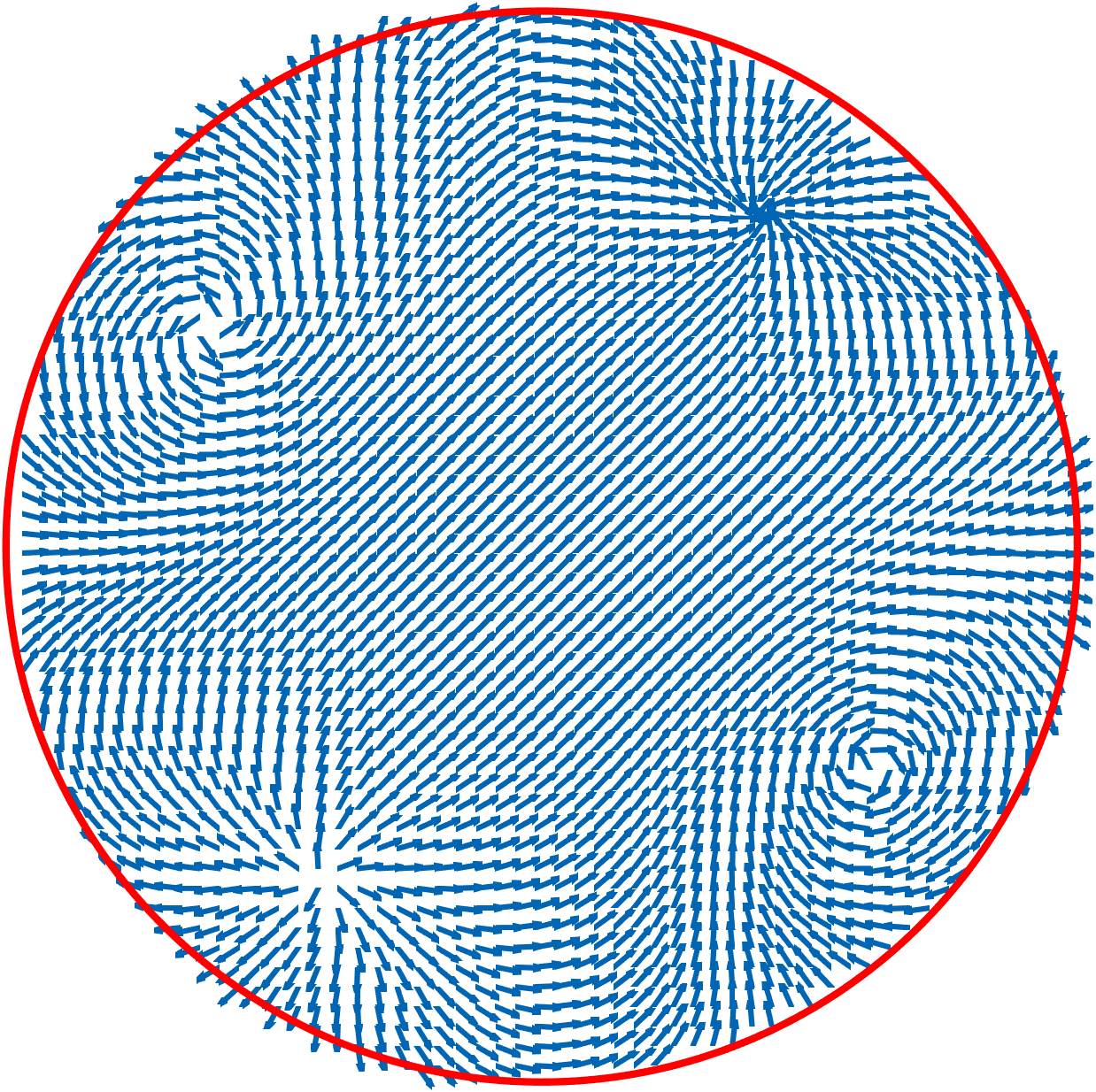}
  }

  \caption{
    Evolution of singularity regions in the computed vector field under different values of $\alpha$, with parameters set as $\varepsilon = \tau = \alpha h$. The four subfigures correspond to $\alpha = 0.5$, $0.25$, $0.125$, and $0.0625$, respectively. As $\alpha$ decreases, the singularities become more diffuse, and the radius of the singular regions increases. This illustrates the influence of the Ginzburg--Landau parameters on the structure of the vector field. See Section~\ref{sec:influence}.
  }
  \label{fig:Vector_field_singularties}
  
\end{figure}

In the second experiment, the geometry is unchanged from Section~\ref{sec:UnconditionalDecrease}, with the same physical domain $\Omega_1$ and computational domain $\Omega_2$. We consider the coupled scaling
\[
\epsilon=\tau=\alpha h,
\]
and examine the effect of parameter refinement using $\alpha\in\left\{\tfrac12,\tfrac14,\tfrac18,\tfrac1{16}\right\}$. The resulting directional fields are shown in~\Cref{fig:Vector_field_singularties}, illustrating the dependence of the steady-state singularity pattern on $\alpha$.

Across all tested values, the steady states retain four singularities, but their locations vary systematically with $\alpha$. As $\alpha$ decreases, the singularities migrate radially outward toward $\partial\Omega_1$, indicating that the coupled scale $\epsilon=\tau=\alpha h$ influences not only the effective diffusion length but also the equilibrium placement of topological defects.

To assess the sharp-interface limit, a reference solution is computed on $\Omega_1$ using the Merriman--Bence--Osher (MBO) scheme \cite{Osting2019}, which provides an operator-splitting approximation of the variational model~\eqref{eq:originproblem}. Convergence of MBO-type diffusion generated method to the associated gradient flow has been established in~\cite{laux2019analysis}. In our implementation, the MBO scheme is discretized by $P_1$ finite elements in space and explicit Euler time stepping.

Comparing~\Cref{fig:Vector_field_singularties} with the MBO solution in~\Cref{fig:MBOSolution} shows that, in the sharp-interface regime, the singularities lie at approximately $r\approx 0.85$. This is consistent with the analysis in~\cite{BEAUFORT2017219}, where local minimizers of the renormalized energy on circular domains were characterized in terms of the singularity radius. Moreover, as $\alpha$ decreases (and hence $\epsilon,\tau\to 0$), the singularity radii produced by the proposed model approach the MBO-predicted value, supporting the asymptotic consistency of the proposed regularization with~\eqref{eq:originproblem}.

\begin{figure}[t!]
  \centering
  \subfloat[]{%
    \includegraphics[width=0.3\linewidth]{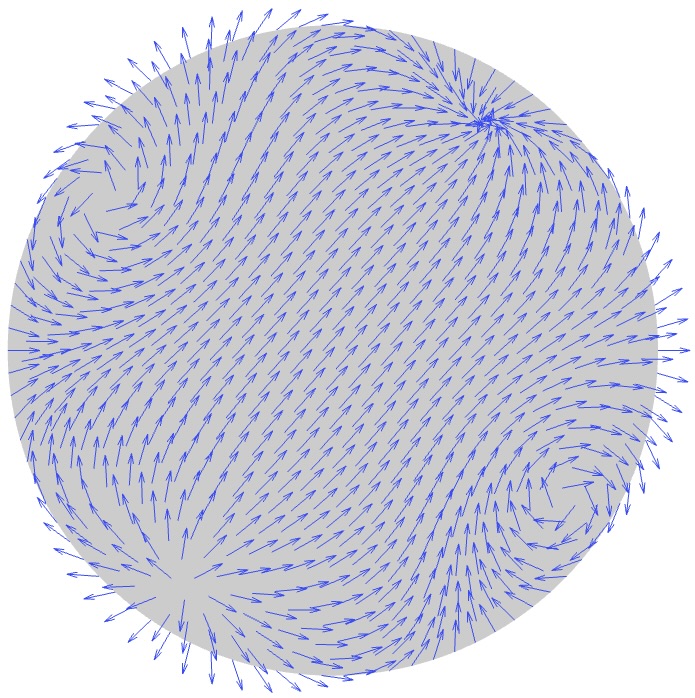}
    \label{fig:MBO_vector}
  }
  \hspace{0.02\linewidth} 
  \subfloat[]{%
    \includegraphics[width=0.35\linewidth]{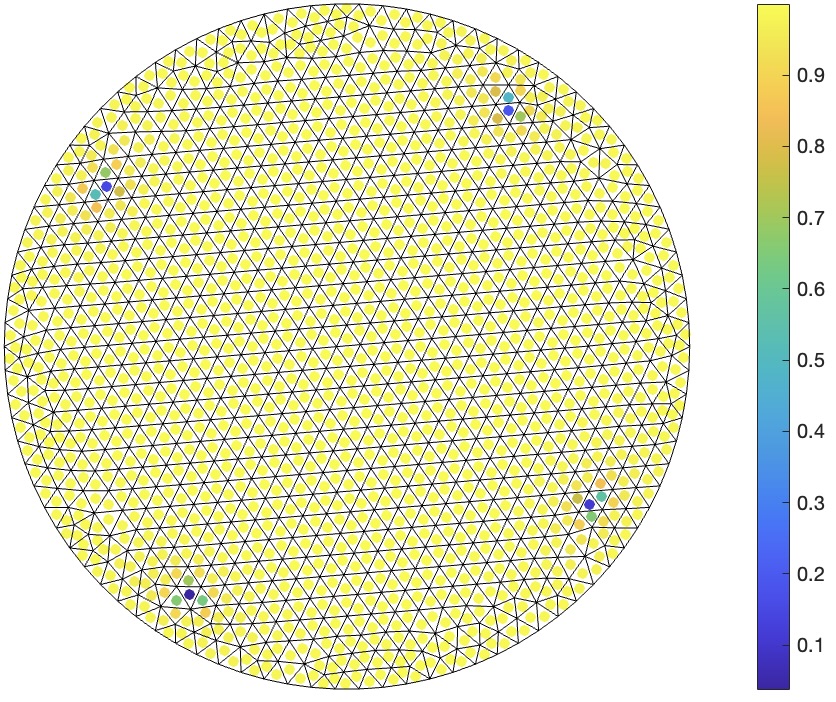}
    \label{fig:fig:MBO_norm_in_tri}
  }
  \caption{(a) Direction field computed by the MBO method. (b) Magnitude of the vectors evaluated at the triangle barycenters of the mesh used in the MBO computation, indicating the locations of singularities. See Section~\ref{sec:influence}.}
  \label{fig:MBOSolution}
\end{figure}

\begin{figure}[t!]
  \centering
  \subfloat[]{%
    \includegraphics[width=0.45\linewidth]{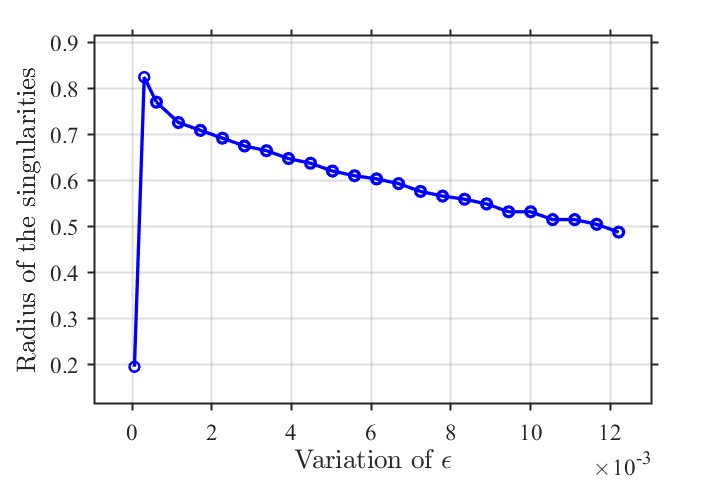}
    \label{fig:Radius_singularity_siam}
  }
  \hfill
  \subfloat[]{%
    \includegraphics[width=0.45\linewidth]{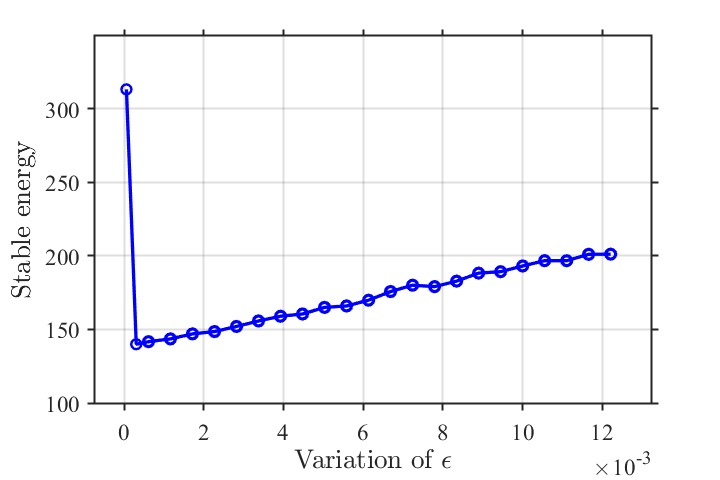}
    \label{fig:Energy_sigularity_siam}
  }
  \caption{(a) Evolution of the singularity radius (b) the corresponding minimized energy as the regularization parameter $\varepsilon = \tau$ varies. See Section~\ref{sec:influence}.}
  \label{fig:Radiusandsingularity}
\end{figure}

The dependence on $\alpha$ is further quantified in~\Cref{fig:Radiusandsingularity}. The curves in (a) and (b) report, respectively, the change of steady-state singularity radius and the Dirichlet energy on $\Omega_1$ as $\alpha$ is refined from $1$ to $1/150$. Over this range, the steady-state energy decreases monotonically as the singularities move outward, consistent with the mechanism described in Section~\ref{sec:Ginzburg}, where minimizers are governed by the renormalized energy.

When $\alpha$ is reduced below an effective resolution threshold (approximately $1/150h$), a qualitative change is observed. Although the iteration remains stable and continues to produce four singularities, their radii shift abruptly to $r\approx 0.2$. This behavior indicates that, for sufficiently small $\alpha$, discretization effects become dominant and the computed steady state no longer reflects the intended sharp-interface limit. In the reported experiments, the method exhibits robust and physically consistent behavior for $\alpha\in(1/64,1)$.

\subsection{Performance under high curvature and sharp corners} \label{sec: perf}

Sections~\ref{sec:Ginzburg} and~\ref{sec:UnconditionalDecrease} focus on how the variational model~\eqref{eq:originproblem} selects singularity configurations. From a meshing perspective, however, the relevance of Dirichelt energy as a proxy for mesh quality is not immediate: as noted in~\cite{Osting2019}, lower-energy configurations do not necessarily yield better block alignment or higher-quality quadrilateral elements. In addition, many practical geometries contain $C^0$ regularity boundaries (e.g., polygons with corners), whereas the effect of reduced boundary regularity on cross-field models is less well understood.

This subsection investigates the behavior of the proposed methods on domains with concentrated curvature and with sharp corners. Results are compared against solutions of~\eqref{eq:originproblem} computed by the MBO method, with the goal of assessing robustness with respect to geometric irregularities and the extent to which the parameters provide control over the resulting singularity pattern and separatrix-induced decomposition.

\begin{figure}[t!]
  \centering

  \includegraphics[width=0.3\textwidth]{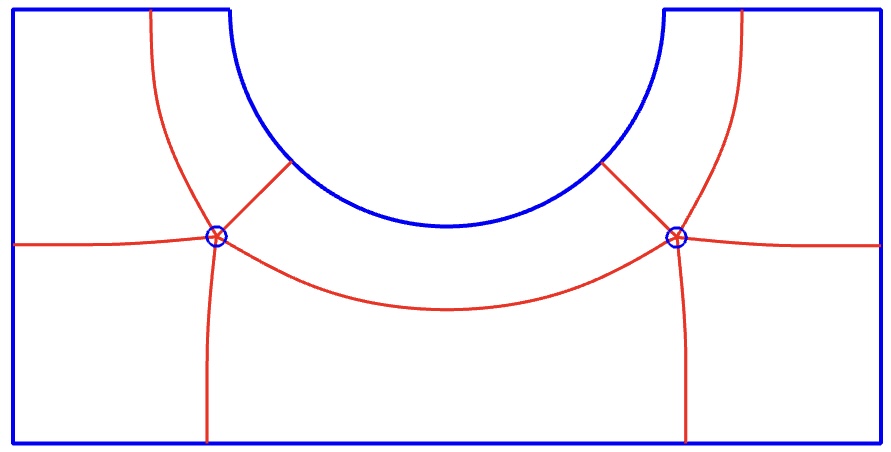}
  \hspace{2mm}
  \includegraphics[width=0.3\textwidth]{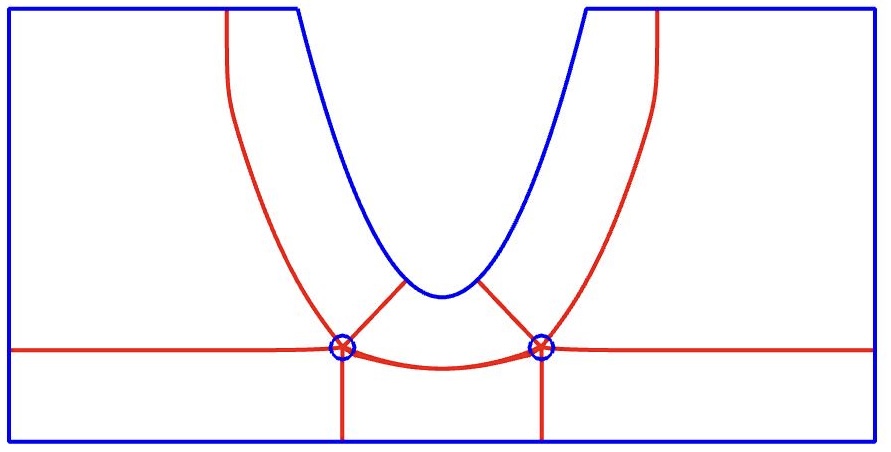}
  \hspace{2mm}
  \includegraphics[width=0.3\textwidth]{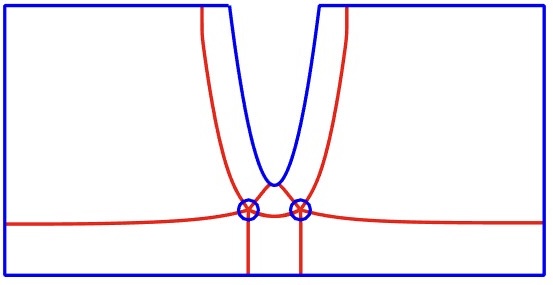}
  
  \vspace{3mm}  
  
  \includegraphics[width=0.3\textwidth]{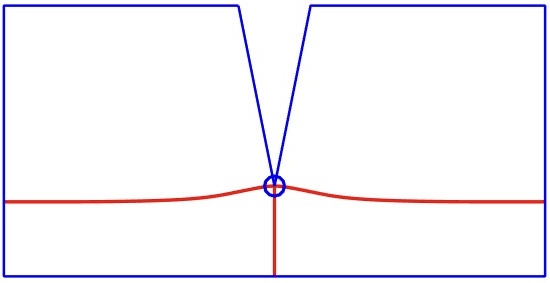}
  \hspace{2mm}
  \includegraphics[width=0.3\textwidth, height=0.15\textwidth]{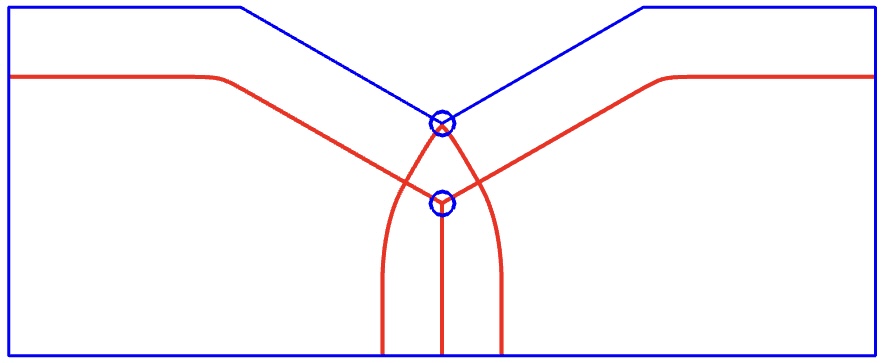}
  \hspace{2mm}
  \includegraphics[width=0.3\textwidth]{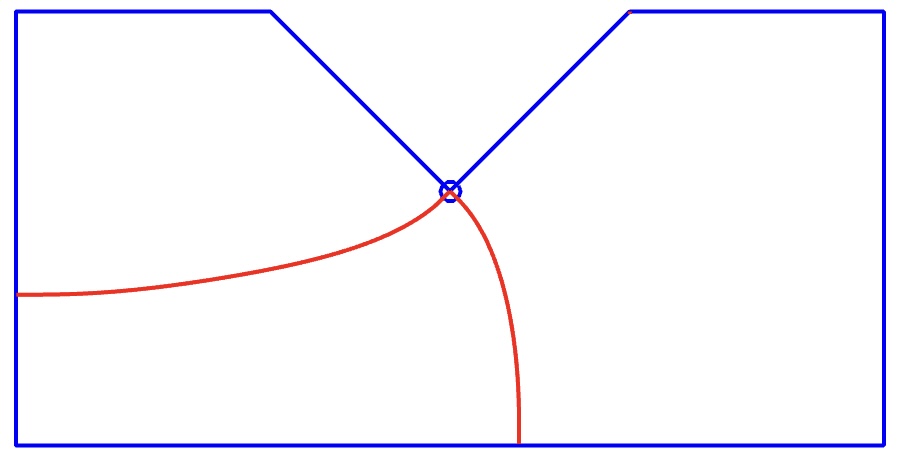}
  
  \vspace{3mm}  
  
  \includegraphics[width=0.3\textwidth]{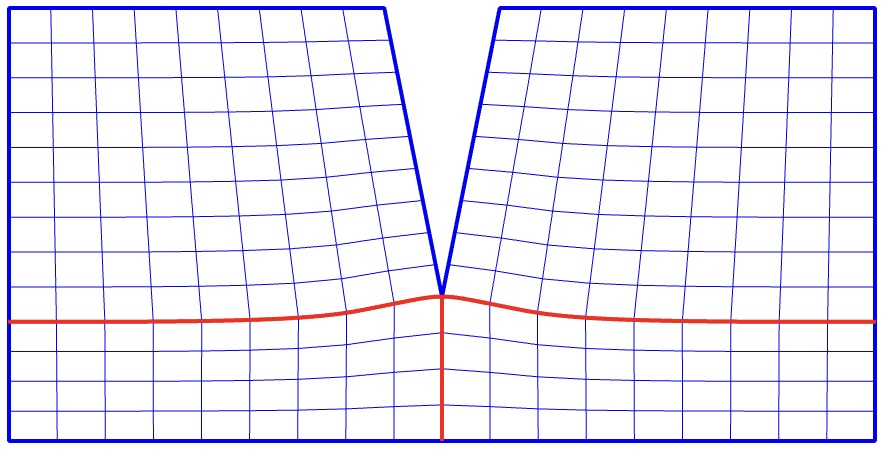}
  \hspace{2mm}
  \includegraphics[width=0.3\textwidth, height=0.15\textwidth]{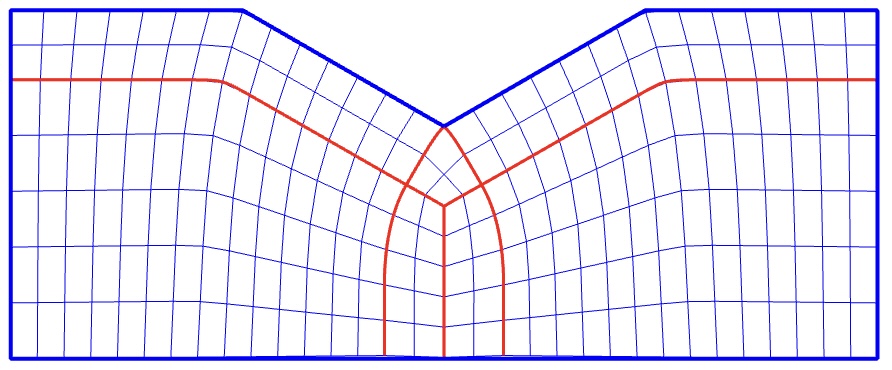}
  \hspace{2mm}
  \includegraphics[width=0.3\textwidth]{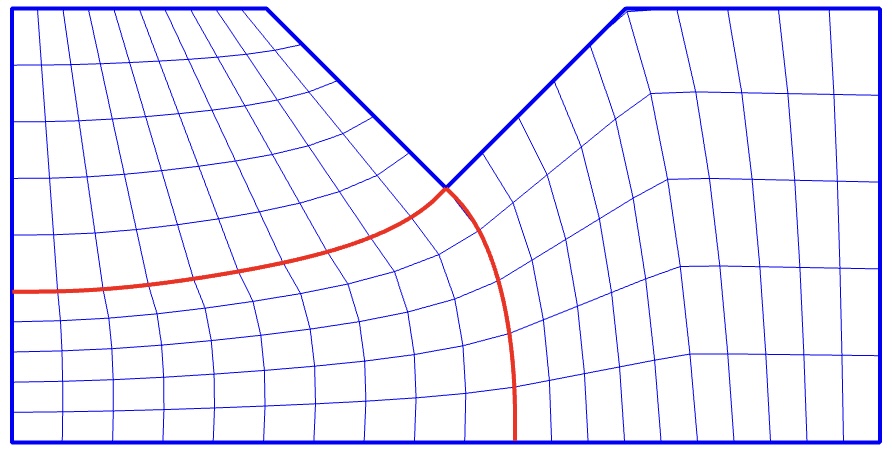}

  \caption{Singularities and streamlines generated by \Cref{alg:iterative_projection} under different boundary curvatures. The first and second rows correspond to different curvature settings: $y = \sqrt{1-x^2}$, $y = 4x^2$, $y = 16x^2$, $y = 4|x|$, $y = \frac{1}{\sqrt{3}}|x|$ and $y = |x|$, respectively. See Section~\ref{sec: perf}.}
  \label{fig:C0boundary}
\end{figure}

\begin{figure}[t!]
  \centering
  \includegraphics[width=0.4\textwidth]{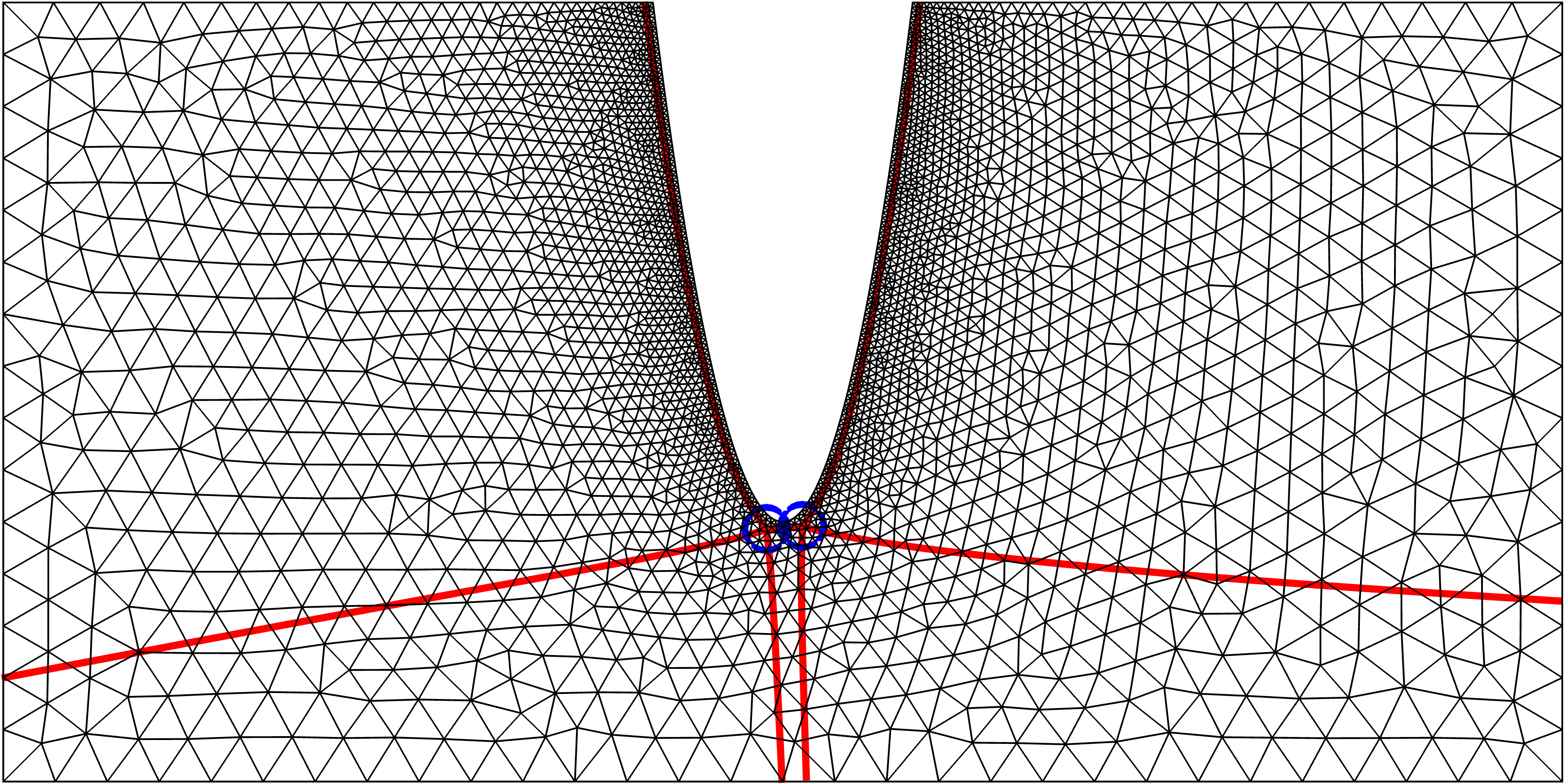}
  \hspace{1cm}
  \includegraphics[width=0.25\textwidth]{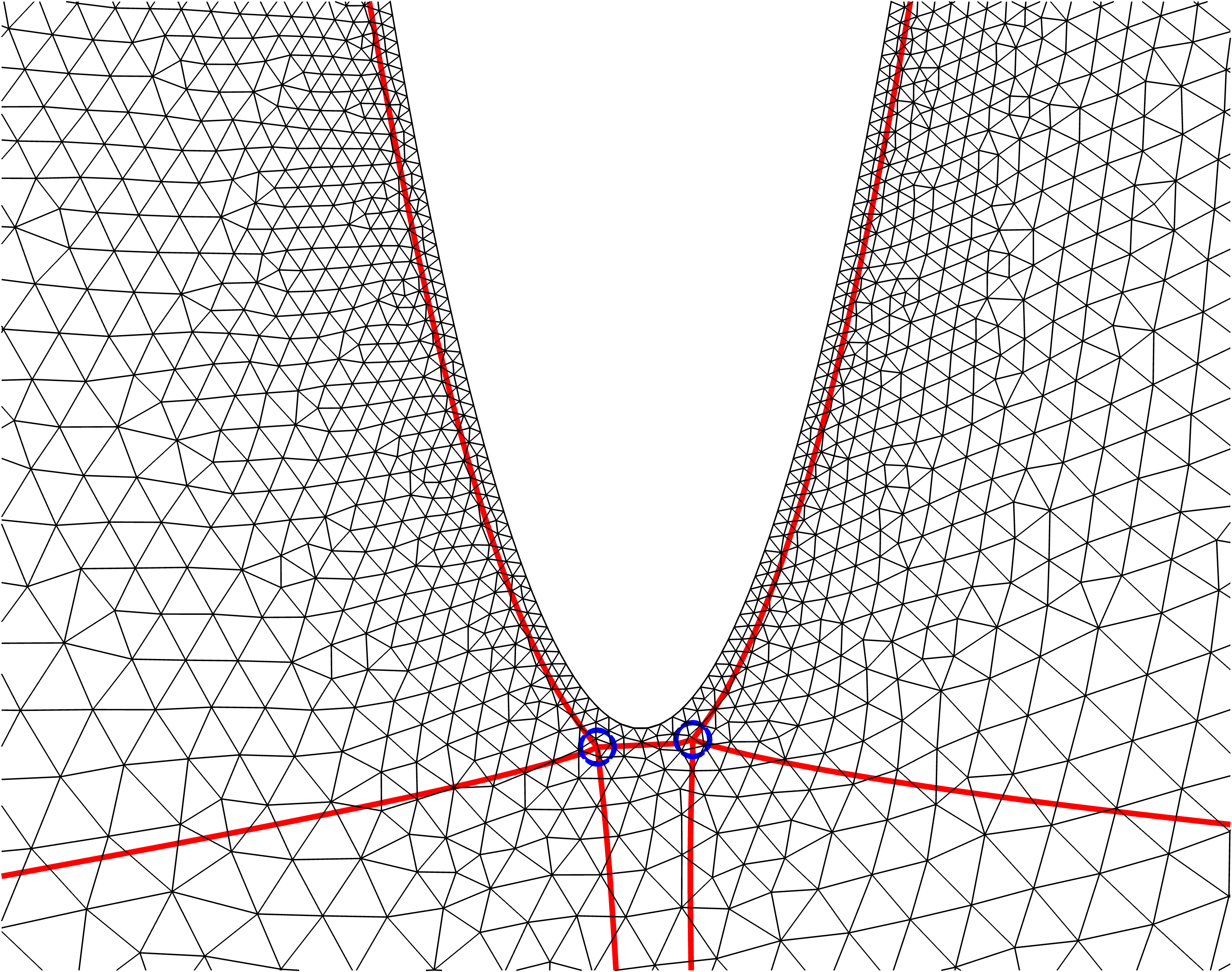}
 \caption{Left: Streamline partitions generated from the cross field corresponds to  $y = 16x^2$, computed using the MBO method. Right: Zoom out of the region around the singularities in the left figure.  See Section~\ref{sec: perf}.}
  \label{fig:MBO_H3}
\end{figure}

\begin{figure}[t!]
    \centering

    \begin{subfigure}[t]{0.18\textwidth}
        \centering
        \includegraphics[width=\linewidth]{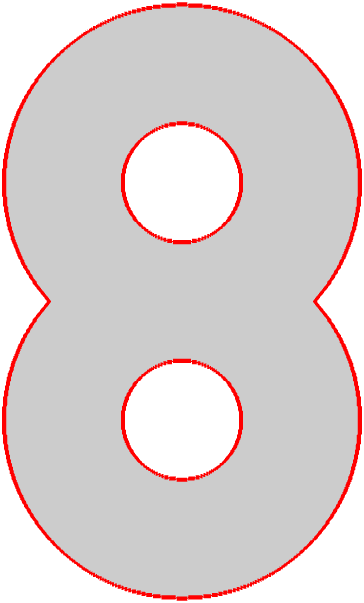}
        \label{fig:8_indicator}
    \end{subfigure}
    \hspace{22mm}
    \begin{subfigure}[t]{0.18\textwidth}
        \centering
        \includegraphics[width=\linewidth]{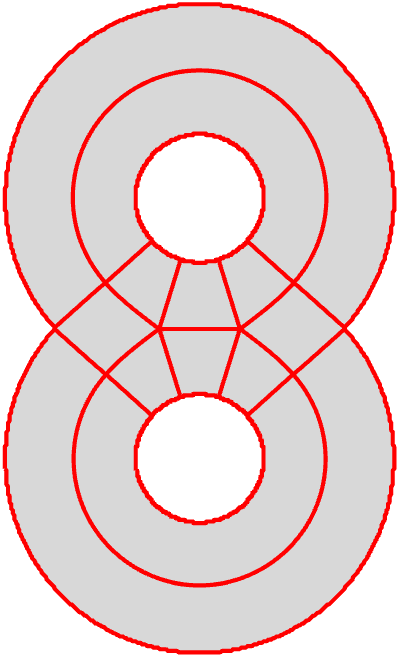}
        \label{fig:8_streamline}
    \end{subfigure}
    \hspace{22mm}
    \begin{subfigure}[t]{0.18\textwidth}
        \centering
        \includegraphics[width=\linewidth]{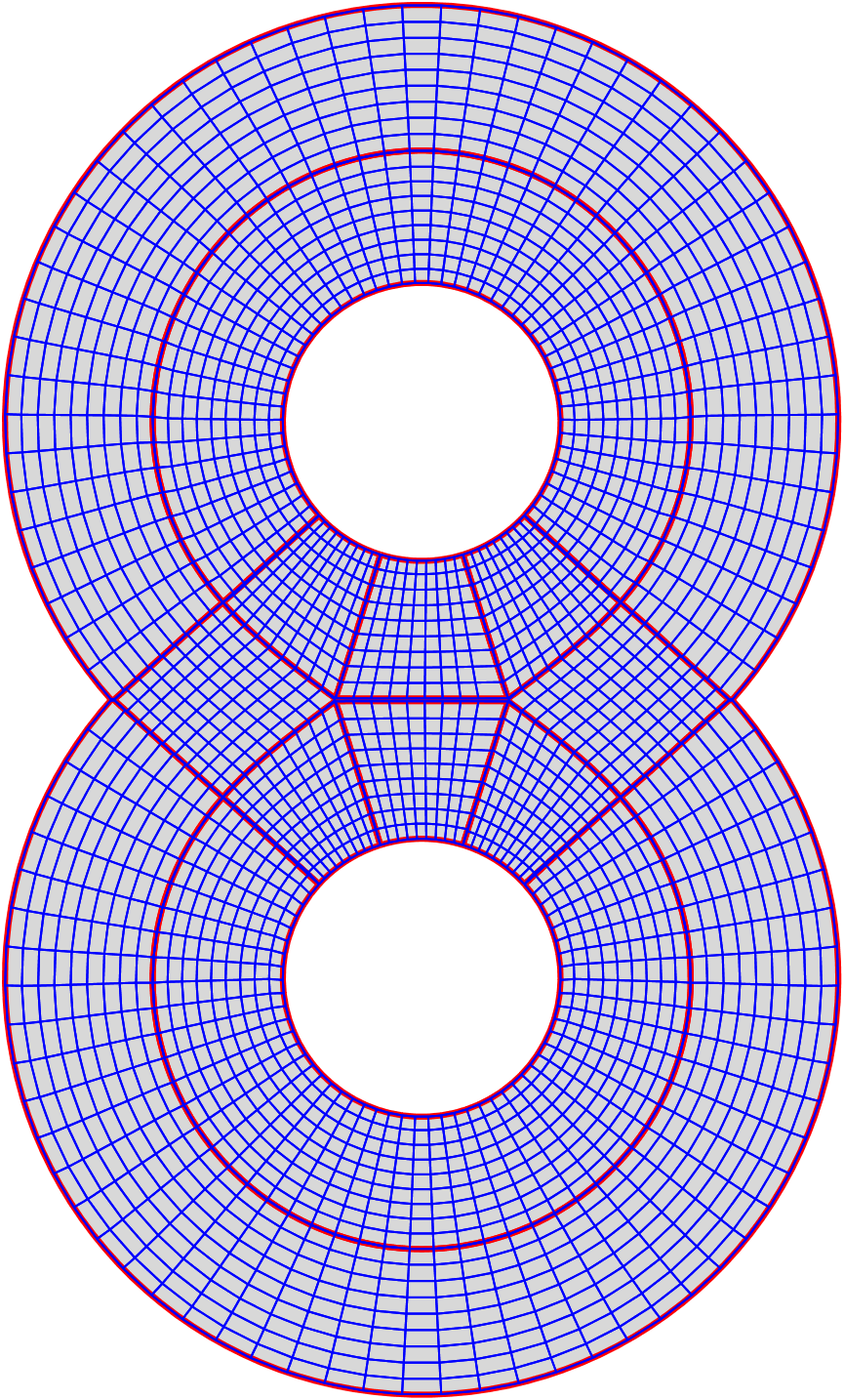}
        \label{fig:8_mehsgen}
    \end{subfigure}

    \begin{subfigure}[t]{0.27\textwidth}
        \centering
        \includegraphics[width=\linewidth]{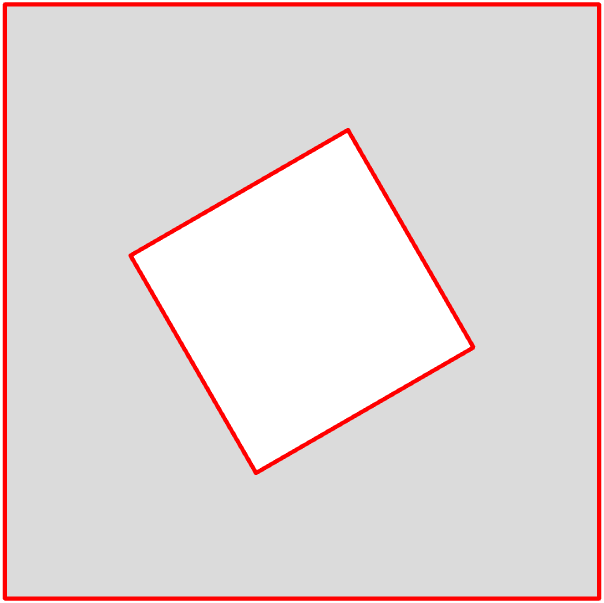}
        \label{fig:H6_indicator}
    \end{subfigure}
    \hspace{8mm}
    \begin{subfigure}[t]{0.27\textwidth}
        \centering
        \includegraphics[width=\linewidth]{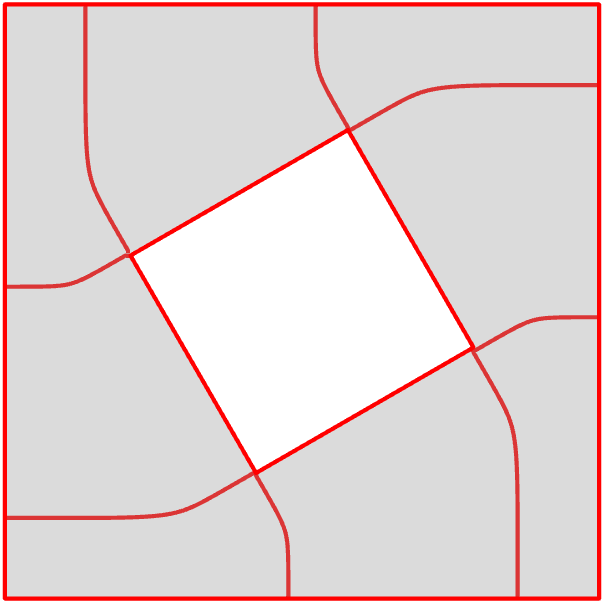}
        \label{fig:H6_streamline}
    \end{subfigure}
    \hspace{8mm}
    \begin{subfigure}[t]{0.27\textwidth}
        \centering
        \includegraphics[width=\linewidth]{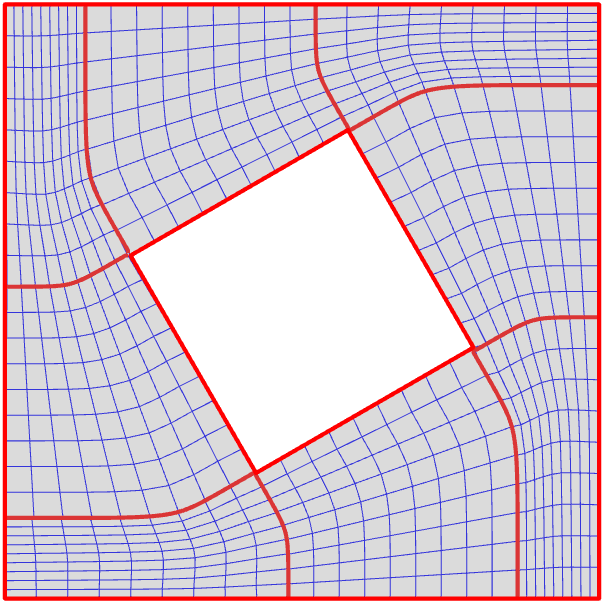}
        \label{fig:H6_meshgen}
    \end{subfigure}

    \begin{subfigure}[t]{0.3\textwidth}
        \centering
        \includegraphics[width=\linewidth]{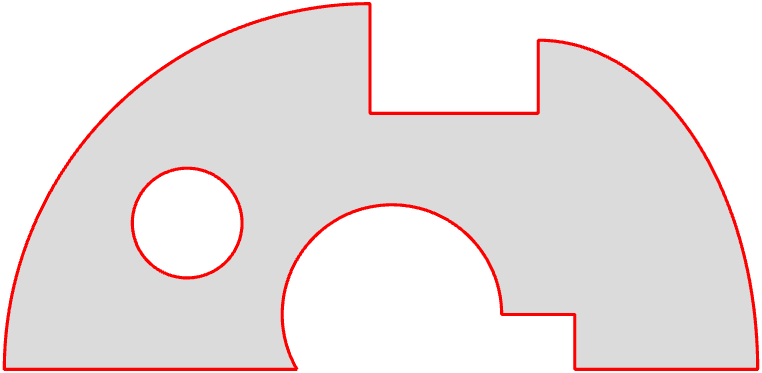}
        \label{fig:E1_indicator}
    \end{subfigure}
    \hspace{2mm}
    \begin{subfigure}[t]{0.3\textwidth}
        \centering
        \includegraphics[width=\linewidth]{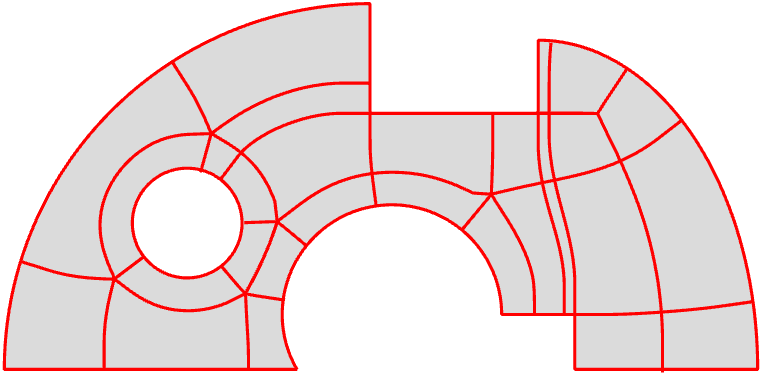}
        \label{fig:E1_streamline}
    \end{subfigure}
    \hspace{2mm}
    \begin{subfigure}[t]{0.3\textwidth}
        \centering
        \includegraphics[width=\linewidth]{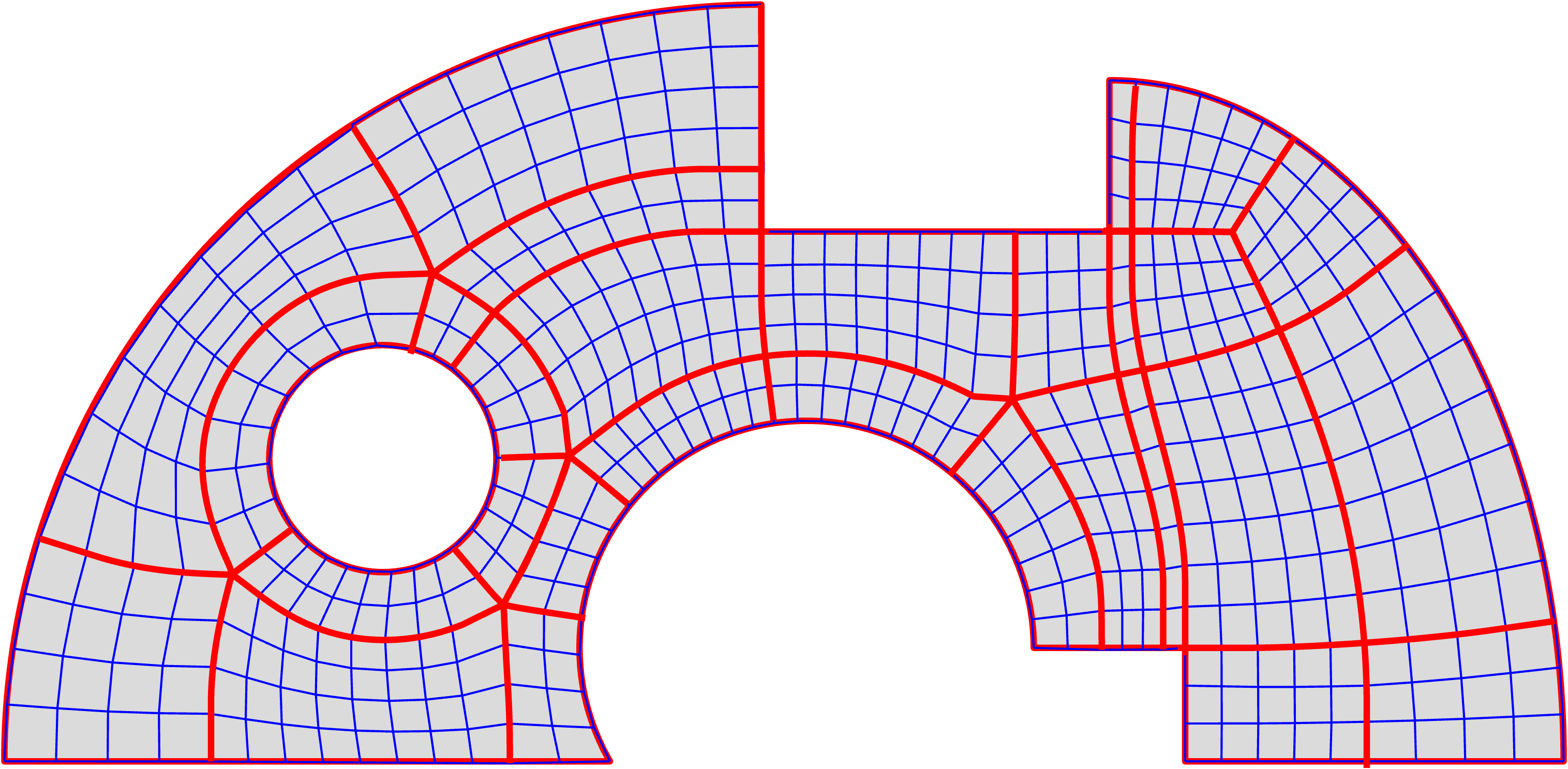}
        \label{fig:E1_meshgen}
    \end{subfigure}

    \begin{subfigure}[t]{0.3\textwidth}
        \centering
        \includegraphics[width=\linewidth]{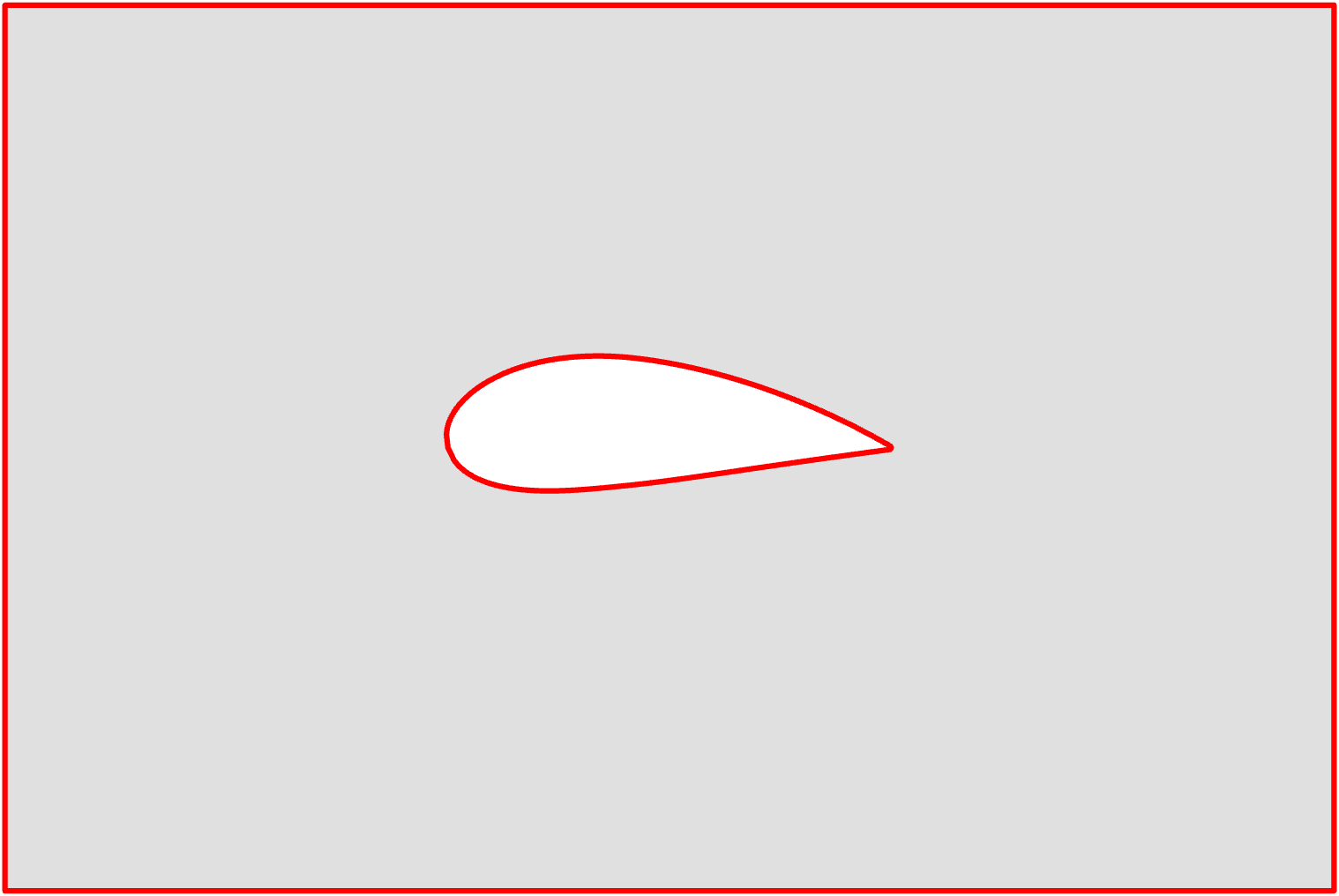}
        \label{fig:E2_indicator}
    \end{subfigure}
    \hspace{2mm}
    \begin{subfigure}[t]{0.3\textwidth}
        \centering
        \includegraphics[width=\linewidth]{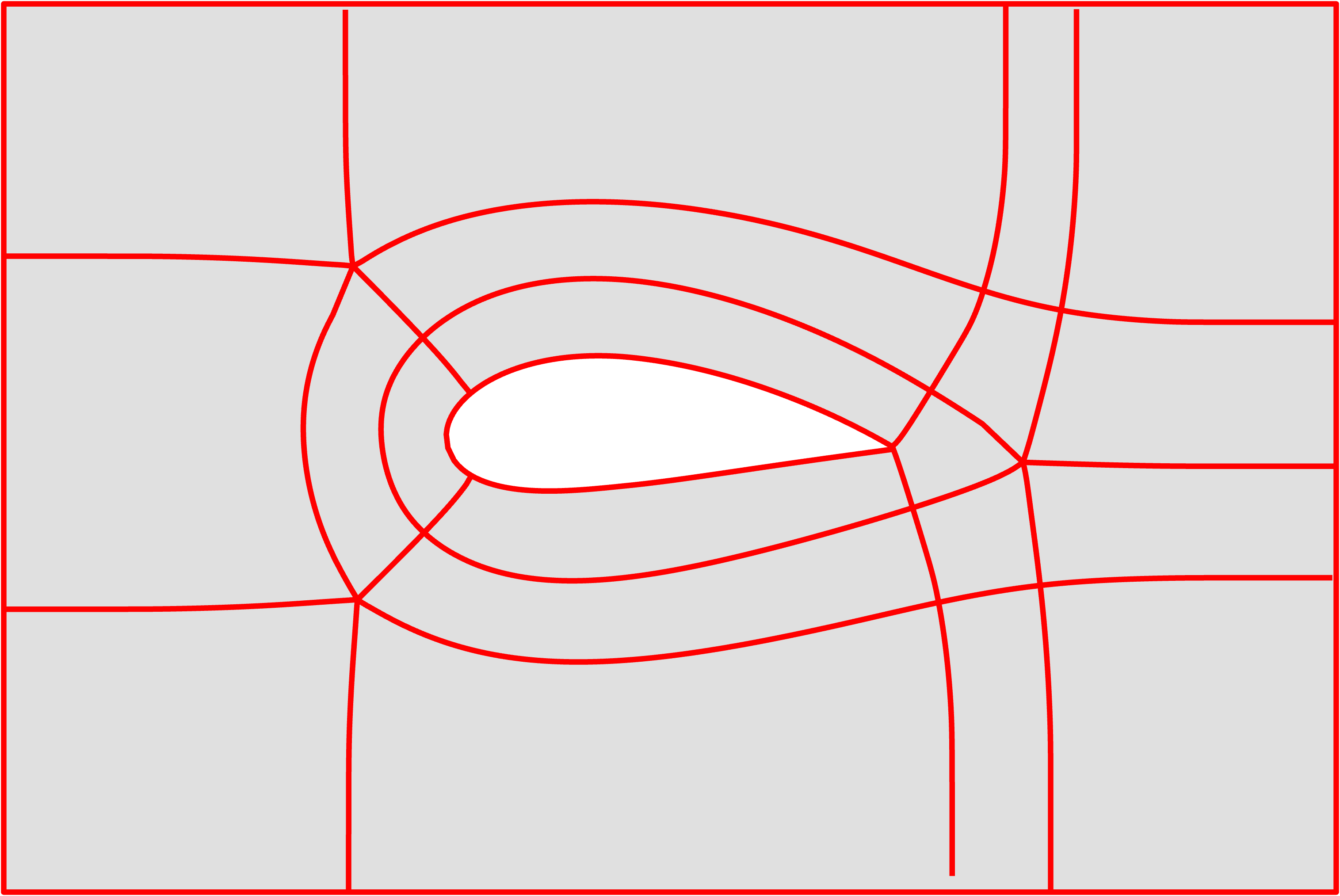}
        \label{fig:E2_streamline}
    \end{subfigure}
    \hspace{2mm}
    \begin{subfigure}[t]{0.3\textwidth}
        \centering
        \includegraphics[width=\linewidth]{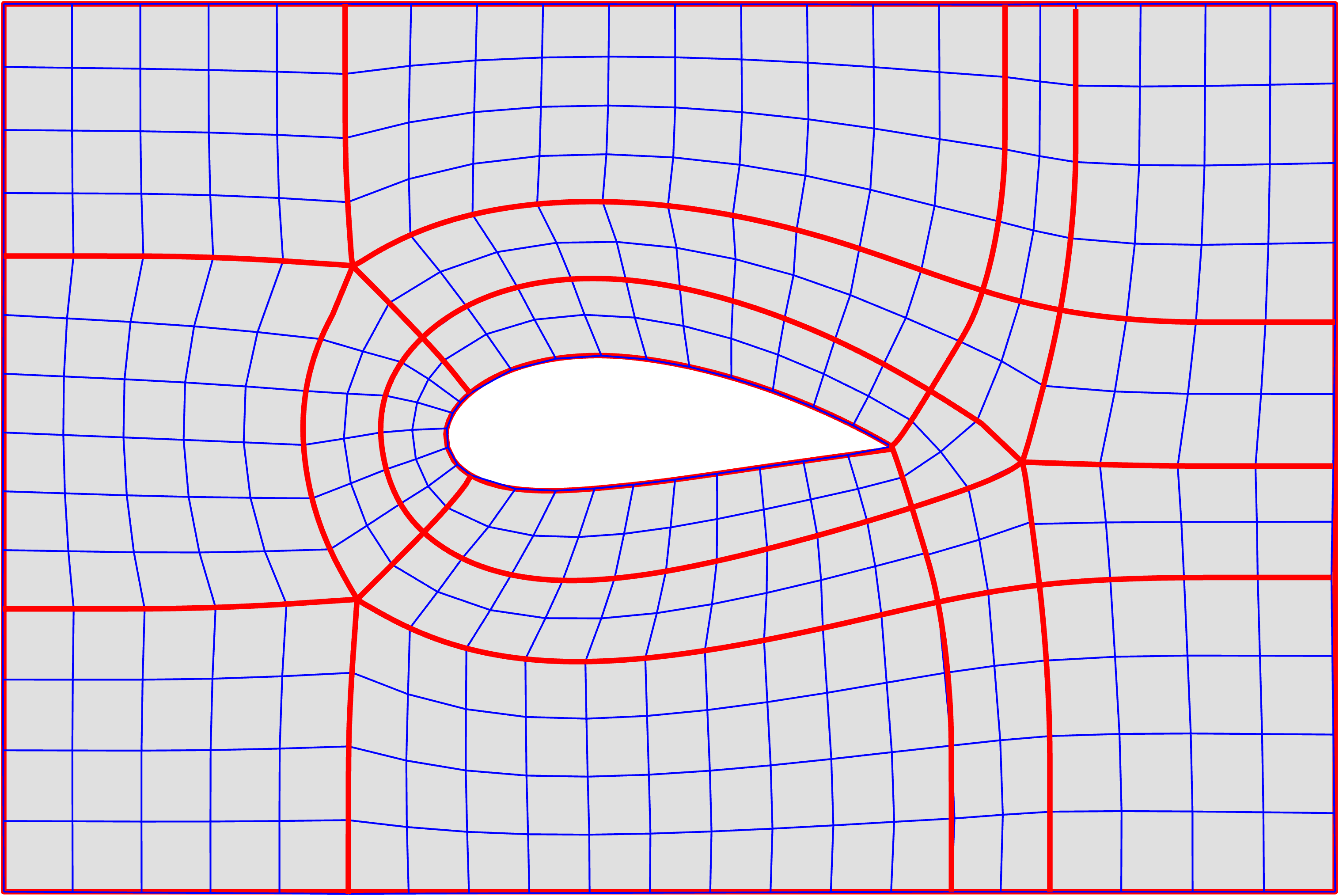}
        \label{fig:E2_meshgen}
    \end{subfigure}

    \caption{Quad mesh generation results for various geometries. Each row displays: (Left) the input domain; (Center) the topological decomposition induced by the cross field separatrices; and (Right) the resulting quad mesh with the decomposition structure overlaid in red. See Section~\ref{sec:exMG}.}
    \label{fig:Example_quad_mesh}
\end{figure}

To isolate the effect of boundary curvature on the singularity configuration, we consider a modified rectangular domain
\[
\Omega_1 = [-1.5,1.5]\times[-0.5,1],
\]
in which the top boundary segment is replaced by a curve that bends into the interior. We first study three smooth profiles,
\[
y=-\sqrt{1-x^2},\qquad y=4x^2,\qquad y=16x^2,
\]
which correspond to increasingly concentrated curvature near the origin. We then examine three geometries with $C^0$ boundary singularities, in which the interior angle at the origin varies from acute to obtuse, including the right-angle case $\pi/2$.

The resulting cross fields and meshes are shown in~\cref{fig:C0boundary}. In the smooth cases (first row), increasing curvature concentration draws the two interior singularities of index $-\tfrac14$ progressively closer to the top boundary. When the boundary becomes non-smooth (second row), these two defects merge at the corner, producing an effective boundary singularity of index $-\tfrac12$. This transition is not captured by standard Ginzburg--Landau asymptotics, indicating that the limiting process from smooth to $C^0$ boundaries can violate the regularity assumptions under which the renormalized energy is typically derived and interpreted. As the corner angle increases, further changes occur: in the right-angle case, a boundary singularity persists (with index $+\tfrac14$ in our experiments), whereas for an obtuse corner an additional interior singularity of index $+\tfrac14$ appears. Overall, these experiments demonstrate a nontrivial dependence of the singularity pattern on geometric singularities and boundary regularity.

The comparison with MBO computations in~\Cref{fig:MBO_H3} also highlights practical limitations of solving~\eqref{eq:originproblem} by MBO on domains with strongly varying curvature. Rapid changes in boundary normals impose stringent spatial resolution requirements; maintaining accuracy typically calls for adaptive, highly nonuniform meshes. In addition, the clustering of singularities near high-curvature regions may induce irregular and unbalanced block decompositions, which can deteriorate element quality and complicate control of mesh-size transitions across block interfaces. In contrast, the proposed method operates on a fixed background grid and remains stable under the same geometric variations; moreover, through the parameters $(\epsilon,\tau)$, it provides a mechanism to influence the equilibrium placement of singularities, which is advantageous when constructing block decompositions in complex geometries.

\subsection{Example mesh generation} \label{sec:exMG}
In the preceding sections, we investigated the stability, efficiency, and tunability of the proposed algorithm across a variety of settings. These studies demonstrate that the method remains robust under varying parameter choices and geometric configurations. In this final example, we apply the full meshing framework to several representative domains, including composite and topologically nontrivial geometries. This experiment serves to validate the end-to-end capability of the algorithm for practical mesh generation tasks. The results, presented in \Cref{fig:Example_quad_mesh}, illustrate the adaptability of the method to complex regions, highlighting its applicability to general computational domains.

\section{Conclusions}
\label{sec:conclusions}
In this paper, an efficient and stable method is proposed for generating block-structured quadrilateral meshes via energy minimization of a relaxed Ginzburg--Landau model. By extending the problem to a regular computational domain and applying Laplacian of the exponential, the proposed approach enables fast and stable computation using FFT-based diffusion and projection steps. The proposed framework avoids the need for input triangulations or complex solvers, while ensuring monotonic energy decreasing. Moreover, it allows flexible control over singularity positions by exploring multiple energy levels, addressing limitations in existing techniques. The numerical results confirm the efficiency, robustness, and ability of the method to produce high-quality meshes with customizable singularity layouts. 

\section*{Acknowledgments}
We would like to thank Prof. Na Lei (Dalian University of Technology) and Prof. Falai Chen (University of Science and Technology of China) for their valuable suggestions and insightful discussions.

\bibliographystyle{siamplain}
\bibliography{references}

\end{document}